
\documentclass[12 pt]{article}%
\usepackage{amsmath, amsfonts, amsthm, color,latexsym}
\usepackage{amssymb}
\usepackage{color}
\usepackage{graphicx}
\usepackage{amsmath}
\usepackage{amsfonts}%
\setcounter{MaxMatrixCols}{30}
%TCIDATA{OutputFilter=latex2.dll}
%TCIDATA{Version=5.50.0.2890}
%TCIDATA{LastRevised=Sunday, February 16, 2014 07:21:39}
%TCIDATA{<META NAME="GraphicsSave" CONTENT="32">}
%TCIDATA{<META NAME="SaveForMode" CONTENT="1">}
%TCIDATA{BibliographyScheme=Manual}
%BeginMSIPreambleData
\providecommand{\U}[1]{\protect\rule{.1in}{.1in}}
%EndMSIPreambleData
\newtheorem{theorem}{Theorem}[section]
\newtheorem{proposition}[theorem]{Proposition}
\newtheorem{corollary}[theorem]{Corollary}

\newtheorem{lemma}[theorem]{Lemma}
\newtheorem{final remark}[theorem]{Final Remark}
\newtheorem{definition}[theorem]{Definition}
\textwidth=16cm
\textheight=23cm
\hoffset=-15mm
\voffset=-20mm

\begin{document}

\title{\textsc{Coincidence results for summing multilinear mappings}}
\date{}
\author{Oscar Blasco\thanks{Supported by Ministerio de Ciencia e Innovaci\'{o}n
MTM2011-23164}\,, Geraldo Botelho\thanks{Supported by CNPq Grant
302177/2011-6 and Fapemig Grant PPM-00326-13.}\,, Daniel Pellegrino\thanks{Supported by CNPq
Grant 477124/2012-7.}~ and Pilar Rueda\thanks{Supported by Ministerio de
Ciencia e Innovaci\'{o}n MTM2011-22417 .\hfill\newline2000 Mathematics Subject
Classification. Primary 46G25; Secondary 47B10. \newline Keywords: absolutely
summing, projective tensor product, Cohen summing, multilinear mapping.}}
\maketitle

\begin{abstract}
In this paper we prove coincidence results concerning spaces of absolutely
summing multilinear mappings between Banach spaces. The nature of these
results arises from two distinct approaches: the coincidence of two \textit{a
priori} different classes of summing multilinear mappings and the summability
of all multilinear mappings defined on products of Banach spaces. Optimal
generalizations of known results are obtained. We also introduce and explore
new techniques in the field, for example a technique to extend coincidence
results for linear, bilinear and even trilinear mappings to general
multilinear ones.

\end{abstract}

\section*{Introduction}

The theory of absolutely summing multilinear mappings between Banach spaces
has its root in the research program designed by A. Pietsch in \cite{Pie}, as
an attempt to generalize the linear operator theory to a multilinear context.
A great amount of research in this direction has been done since then and
applications of the theory of nonlinear summing mappings to other fields have
been found, for example it has been used in the study of the maximal domain of
convergence of vector-valued Dirichlet series \cite{DeGaMaPe} and in Quantum
Information Theory \cite{monta, PWPVJ}.

By definition, absolutely summing multilinear mappings improve the summability
of sequences in Banach spaces and this is why many researchers have focussed
much of their interest on the study of these mappings. The general purpose of
these studies is to obtain and to improve summability conditions for
multilinear mappings. A desirable result in the theory is what has been called
in the literature a \textit{coincidence result}. This consists in finding
examples of Banach spaces $E_{1},\ldots,E_{n},F$, or general conditions on
them, that ensure a good summability behavior of every continuous $n$-linear
mapping from $E_{1}\times\cdots\times E_{n}$ to $F$ or improve the summability
behavior of those mappings that already enjoy some summing property. A
cornerstone in the linear theory is Grothendieck's theorem, that asserts that
every continuous linear operator from $\ell_{1}$ to $\ell_{2}$ is absolutely
summing. Grothendieck's theorem has been a permanent source of inspiration in
the search of linear and multilinear coincidence results. The Defant-Voigt
Theorem (see \cite[Corollary 3.2]{Port}, \cite[Proposition 17.5.1]{Pietsch} or
\cite{BBPR}, where an improved version can be found), which states that any
continuous multilinear functional is absolutely $(1;1,\ldots,1)$-summing, is
probably the first coincidence result for multilinear mappings. We mention a
few more examples. The fact that bilinear forms on either an $\mathcal{L}%
_{\infty}$-space or the disc algebra or the Hardy space are $2$-dominated was
proved in \cite[Theorem 3.3]{bote97} and \cite[Proposition 2.1]{pams},
respectively. In \cite[Theorem~3.7]{BBPR} the authors use this bilinear
coincidence to show that all $n$-linear forms defined on a product
$E_{1}\times\cdots\times E_{n}$ of Banach spaces is $(1;2,\ldots,2)$-summing
whenever $E_{1}=E_{2}$ and each $E_{j}$ is either an $\mathcal{L}_{\infty}%
$-space, the disc algebra $\mathcal{A}$ or the Hardy space $\mathcal{H}%
^{\infty}$. It is worth mentioning that this situation of lifting properties
from bilinear to multilinear mappings is non-trivial in many cases. Indeed, it
is not true that the multilinear theory follows by induction from the linear
case. Many examples of the difficulties of lifting the linear theory to the
multilinear setting can be found in the literature (see, e.g., \cite{Junek}).

This paper is concerned with coincidence results in the theory of absolutely
summing multilinear mappings. Section \ref{Not} is devoted to fix the notation
and to recall some definitions and basic facts. In Section \ref{D} we
investigate two notions related to that of absolutely summing multilinear
mappings, namely, \textit{weakly $(p;p_{1},p_{2},\ldots,p_{n})$-summing}
multilinear mappings (cf. Definition \ref{def1}) and%as those where $\hat A$ maps $\ell^w_{p_1}(E_1)\times \cdots \ell^w_{p_n}(E_n)$ boundedly into $\ell_p^w(F)$ and
\textit{\ Cohen $(p;p_{1},p_{2},\ldots,p_{n})$-nuclear} multilinear mappings
(cf. Definition \ref{def2}).
%as those where $\hat A$ maps $\ell^w_{p_1}(E_1)\times \cdots \ell^w_{p_n}(E_n)$ boundedly into $\ell_p\langle F\rangle$ and relate them with the concept previously defined.
Relations between these notions and between them and the usual concept of
absolutely summing multilinear mappings are established. In Section \ref{lift}
we use the cotype of the Banach spaces $E_{1},\ldots,E_{n}$ and find some
conditions on the positive numbers $p,p_{1},\ldots,p_{n},q,q_{1},\ldots,q_{n}$
to ensure that $(p;p_{1},\ldots,p_{n})$ and $(q;q_{1},\ldots,q_{n})$ summing
mappings coincide. We apply the results of this section to get an optimal
generalization of results of \cite{michels, Junek, pop}. The results presented
in this section also generalize some results of \cite{thi, popaarchiv}. In
Section \ref{section3} we get conditions that ensure that all continuous
multilinear mappings on Banach spaces $E_{1},\ldots,E_{n}$ are $(p;p_{1}%
,\ldots,p_{n})$-summing. We show how to lift summability properties of
bilinear mappings defined on $E_{2i-1}\times E_{2i}$ to $n$-linear mappings
defined on $E_{1}\times\cdots\times E_{n}$. We prove that if any bilinear form
defined on $E^{2}$ is $(1;r,r)$-summing and any trilinear mapping on $E^{3}$
is $(1;r,r,r)$-summing, $1\leq r\leq2$, then any $n$-linear mapping on $E^{n}$
is $(1;r,\ldots,r)$-summing. We also characterize when all multilinear
mappings are $(p;p_{1},\ldots,p_{n})$-summing by means of projective tensor
products of vector valued sequences spaces. A close connection between Cohen
summability and the Littlewood-Orlicz property is also provided.

\section{Notation and background}

\label{Not}

All Banach spaces are considered over the scalar field $\mathbb{K}=\mathbb{R}
$ or $\mathbb{C}$. Given a Banach space $E$, let $B_{E}$ denote the closed
unit ball of $E$ and $E^{\prime}$ denote its topological dual.

Let $p>0$. By $\ell_{p}(E)$ we denote the ($p$-)Banach space of all absolutely
$p$-summable sequences $(x_{j})_{j=1}^{\infty}$ in $E$ endowed with its usual
$\ell_{p}$-norm ($p$-norm if $0<p<1$).
%such that $\|(x_{j})_{j=1}^{\infty}%
%\|_{p}:=(\sum_{j=1}^{\infty}\|x_{j}\|^{p})^{\frac1p}<\infty$ with the norm
%$\|.\|_{p}$.
Let $\ell_{p}^{w}(E)$ be the space of those sequences $(x_{j})_{j=1}^{\infty}$
in $E$ such that $(\varphi(x_{j}))_{j=1}^{\infty}\in\ell_{p}$ for every
$\varphi\in E^{\prime}$ endowed with the norm ($p$-norm if $0<p<1$)
\[
\Vert(x_{j})_{j=1}^{\infty}\Vert_{\ell_{p}^{w}(E)}=\sup_{\varphi\in
B_{E^{\prime}}}\left(  \sum_{j=1}^{\infty}|\varphi(x_{j})|^{p}\right)
^{\frac{1}{p}}.
\]

For $1\leq p\leq\infty$ let $p^{\prime}$ be the conjugate of $p$, i.e.,
$\frac{1}{p}+\frac{1}{p^{\prime}}=1$. We denote by $\ell_{p}\langle E\rangle$
the Banach spaces of sequences $(x_{j})_{j=1}^{\infty}$ in $E$ such that
\[
\Vert(x_{j})_{j=1}^{\infty}\Vert_{\ell_{p}\langle E\rangle}=\sup\left\{
\sum_{j=1}^{\infty}|\langle x_{j},y_{j}^{\ast}\rangle|:\Vert(y_{j}^{\ast
})_{j=1}^{\infty}\Vert_{\ell_{p^{\prime}}^{w}(E^{\prime})}=1\right\}
<\infty.
\]
Obviously one has for $p\geq1$ that
\[
\ell_{p}\hat{\otimes}_{\pi}E\subseteq\ell_{p}\langle E\rangle\subseteq\ell
_{p}(E)\subseteq\ell_{p}^{w}(E).
\]
The space $\ell_{p}\langle E\rangle$ was first introduced in \cite{C} and it
has been recently described in different ways (see \cite{AB1} for a
description as the space of integral operators from $\ell_{p^{\prime}}$ into
$X$ or \cite{BD} and \cite{FR} for the identification with the projective
tensor product $\ell_{p}\langle E\rangle=\ell_{p}\hat{\otimes}_{\pi}E$). In
the particular case of dual spaces, using the weak principle of local
reflexivity (see \cite[page 73]{DF}) one has that a sequence $(x_{j}^{\ast
})_{j=1}^{\infty}$ in $E^{\prime}$ belongs to $\ell_{p}\langle E^{\prime
}\rangle$ if
\begin{equation}
\Vert(x_{j}^{\ast})_{j=1}^{\infty}\Vert_{\ell_{p}\langle E^{\prime}\rangle
}=\sup\left\{  \sum_{j=1}^{\infty}|\langle x_{j}^{\ast},y_{j}\rangle
|:\Vert\left(  y_{j}\right)  _{j=1}^{\infty}\Vert_{\ell_{p^{\prime}}^{w}%
(E)}=1\right\}  <\infty. \label{dualform}%
\end{equation}

Recall that, for $1\leq q\leq p<\infty$, an operator $T\in\mathcal{L}(E;F)$ is
absolutely \textit{$(p,q)$-summing} if there is $C>0$ such that
\begin{equation}
\left(  \sum_{j=1}^{m}\left\Vert T\left(  x_{j}\right)  \right\Vert
^{p}\right)  ^{\frac{1}{p}}\leq C\Vert(x_{j})_{j=1}^{m}\Vert_{\ell_{q}^{w}(E)}
\label{pqas}%
\end{equation}
for any finite sequences $x_{1},\ldots,x_{m}\in E$. $\Pi_{(p;q)}(E;F)$ denotes
the space of $(p,q)$-summing operators with the norm ($p$-norm if $0<p<1$)
given by the infimum of the constants satisfying (\ref{pqas}).

For $1\leq p,q<\infty,$ an operator $T\in\mathcal{L}(E;F)$ is \textit{Cohen
$(p,q)$-nuclear} (see \cite[page 56]{apiola}) if there is $C>0$ such that
\begin{equation}
\sum_{j=1}^{m}|\langle T(x_{j}),y_{j}^{\ast}\rangle|\leq C\Vert(x_{j}%
)_{j=1}^{m}\Vert_{\ell_{q}^{w}(E)}\cdot\Vert(y_{j}^{\ast})_{j=1}^{m}%
\Vert_{\ell_{p^{\prime}}^{w}(F^{\prime})} \label{cohen}%
\end{equation}
for any finite sequences $x_{1},\ldots,x_{m}\in E$ and $y_{1}^{\ast}%
,\ldots,y_{m}^{\ast}\in F^{\prime}$. $CN_{(p;q)}(E;F)$ denotes the space of
Cohen $(p,q)$-nuclear operators with the norm given by the infimum of the
constants satisfying (\ref{cohen}). This notion was introduced by Cohen
\cite{C} for $p=q$.

Note that $T\in CN_{(p;q)}(E;F)$ is equivalent to the correspondence
\[
(x_{j})_{j}\in\ell_{q}^{w}(E)\mapsto(T(x_{j}))_{j}\in\ell_{p}\langle F\rangle
\]
be well defined (hence linear) and bounded. Hence $CN_{(p;q)}(E;F)\subseteq
\Pi_{(p;q)}(E;F)$ and, due to (\ref{dualform}), $T\in{CN}_{(p;q)}(E;F^{\prime
})$ if there is $C>0$ such that
\begin{equation}
\sum_{j=1}^{m}|\langle T(x_{j}),y_{j}\rangle|\leq C\Vert(x_{j})_{j=1}^{m}%
\Vert_{\ell_{q}^{w}(E)}\Vert(y_{j})_{j=1}^{m}\Vert_{\ell_{p^{\prime}}^{w}(F)}.
\label{cohendual}%
\end{equation}

Now we turn our attention to multilinear maps. Let $E_{1},\ldots,E_{n},E,F$ be
Banach spaces. The Banach space of all continuous $n$-linear mappings from
$E_{1}\times\cdots\times E_{n}$ to $F$ is denoted by $\mathcal{L}(E_{1}%
,\ldots,E_{n};F)$ and endowed with the usual sup norm. We simply write
$\mathcal{L}(^{n}E;F)$ when $E_{1}=\cdots=E_{n}=E$.

For $0<p,p_{1},p_{2},\ldots,p_{n}\leq\infty$ , we assume that $\frac{1}{p}%
\leq\frac{1}{p_{1}}+\cdots+\frac{1}{p_{n}}$. An $n$-linear mapping
$A\in{\mathcal{L}}(E_{1},\ldots,E_{n};F)$ is \textit{absolutely $(p;p_{1}%
,p_{2},\ldots,p_{n})$-summing} if there is $C>0$ such that
\begin{equation}
\Vert(A(x_{j}^{1},x_{j}^{2},\ldots,x_{j}^{n}))_{j=1}^{k}\Vert_{p}\leq
C\prod_{i=1}^{n}\Vert(x_{j}^{i})_{j=1}^{k}\Vert_{\ell_{p_{i}}^{w}(E_{i})}
\label{norm}%
\end{equation}
for all finite families of vectors $x_{1}^{i},\ldots,x_{k}^{i}\in E_{i}$,
$i=1,2,\ldots,n$. The infimum of such $C>0$ is called the $(p;p_{1}%
,\ldots,p_{n})$-summing norm ($p$-norm if $0<p<1$) of $A$ and is denoted by
$\pi_{(p;p_{1},\ldots,p_{n})}(A)$. Let $\Pi_{(p;p_{1},p_{2},\ldots,p_{n}%
)}(E_{1},\ldots,E_{n};F)$ denote the space of all absolutely $(p;p_{1}%
,p_{2},\ldots,p_{n})$-summing $n$-linear mappings from $E_{1}\times
\cdots\times E_{n}$ to $F$ endowed with the norm ($p$-norm if $0<p<1$) $\pi_{(p;p_{1}\ldots,p_{n})}$.

For $n\ge1$ and $A\in{\mathcal{L}}(E_{1},\ldots,E_{n};F)$,
\[
\widehat{A}\colon\ell_{\infty}(E_{1})\times\cdots\times\ell_{\infty}%
(E_{n})\longrightarrow\ell_{\infty}(F)~,~ \widehat{A}((x_{j}^{1}%
)_{j=1}^{\infty},\ldots,(x_{j}^{n})_{j=1}^{\infty}):=(A(x_{j}^{1},\ldots
,x_{j}^{n}))_{j=1}^{\infty},
\]
is a bounded $n$-linear mapping. Given subspaces $X_{i}\subseteq\ell_{\infty
}(E_{i})$ for $1\le i\le n$ and $Y\subseteq\ell_{\infty}(F)$, each one endowed
with its own norm, we say that $\widehat{A}\colon X_{1}\times\cdots\times
X_{n}\longrightarrow Y$ is bounded -- or, equivalently, $\widehat{A}
\in\mathcal{L}(X_{1}, \ldots, X_{n}; Y)$ -- if the restriction of $\widehat A$
to $X_{1} \times\cdots\times X_{n}$ is a well defined (hence $n$-linear)
continuous $Y$-valued mapping.
%Given $A\in \mathcal L(E_{1},\ldots, E_{n}; F)$ we write $\hat A$ its vector-valued extension
%$$\Big(\hat A(x^1,\cdots,x^n)\Big)_j= A(x^1_j,\cdots,x^n_j)$$
%for any $x^k\in \ell^w_1(E_k)$ for $k=1,\cdots,n$.
Clearly $A\in\Pi_{(p;p_{1} ,p_{2},\ldots,p_{n})}(E_{1},\ldots,E_{n};F)$ if and
only if maps $\hat A$ maps $\ell^{w}_{p_{1}}(E_{1})\times\cdots\times\ell
^{w}_{p_{n}}(E_{n})$ boundedly into $\ell_{p}(F)$.

Note also that if $T\in\mathcal{L}(E,F)$ and we write $A_{T}\colon E\times
F^{\prime}\longrightarrow\mathbb{K}$ for the bilinear map
\[
A_{T}(x,y^{\ast})=\langle T(x),y^{\ast}\rangle,x\in E,y^{\ast}\in E^{\prime},
\]
then
\[
T\in CN_{(p;q)}(E;F)\Longleftrightarrow A_{T}\in\Pi_{(1;{q},{p^{\prime}}%
)}(E,F^{\prime};\mathbb{K}).
\]

Absolutely summing mappings fulfill the following inclusion result, which
appears in \cite[Proposition 3.3]{Thesis} (see also \cite{BBPR}), and will be
used several times in this paper:

\begin{theorem}
\label{inclusiontheorem}{\rm(Inclusion Theorem)} \ Let $0<q\leq
p\leq\infty$, $0<q_{j}\leq p_{j}\leq\infty$ for all $j=1,\ldots,n$. If
$\frac{1}{q_{1}}+\cdots+\frac{1}{q_{n}}-\frac{1}{q}\leq\frac{1}{p_{1}}%
+\cdots+\frac{1}{p_{n}}-\frac{1}{p}$, then
\[
\Pi_{(q;q_{1},\ldots,q_{n})}(E_{1},\ldots,E_{n};F)\subseteq\Pi_{(p;p_{1}%
,\ldots,p_{n})}(E_{1},\ldots,E_{n};F)
\]
and $\pi_{(p;p_{1},\ldots,p_{n})}\leq\pi_{(q;q_{1},\ldots,q_{n})}$ for all
Banach spaces $E_{1},\ldots,E_{n},F$.
\end{theorem}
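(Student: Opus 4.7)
The plan is to prove the statement via a classical multiplier argument based on Hölder's inequality. The first step is to reduce to the \emph{equality} case $\frac{1}{q_1}+\cdots+\frac{1}{q_n}-\frac{1}{q}=\frac{1}{p_1}+\cdots+\frac{1}{p_n}-\frac{1}{p}$. To do this, introduce the intermediate exponent $\tilde p$ determined by $\frac{1}{p_1}+\cdots+\frac{1}{p_n}-\frac{1}{\tilde p}=\frac{1}{q_1}+\cdots+\frac{1}{q_n}-\frac{1}{q}$; a direct check shows $q\le\tilde p\le p$. Once the equality case is proved, it yields $\Pi_{(q;q_1,\ldots,q_n)}\subset\Pi_{(\tilde p;p_1,\ldots,p_n)}$ with the appropriate norm inequality, and the trivial inclusion $\ell_{\tilde p}\hookrightarrow\ell_p$ (the $p$-norm is bounded by the $\tilde p$-norm for finite sequences when $\tilde p\le p$) immediately upgrades this to $\Pi_{(\tilde p;p_1,\ldots,p_n)}\subset\Pi_{(p;p_1,\ldots,p_n)}$, completing the reduction.

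Under the equality hypothesis, set $\frac{1}{s_i}:=\frac{1}{q_i}-\frac{1}{p_i}$ and $\frac{1}{s}:=\frac{1}{q}-\frac{1}{p}$, so that $\sum_{i=1}^n\frac{1}{s_i}=\frac{1}{s}$. Fix finite families $(x_j^i)_{j=1}^k\subset E_i$ and nonnegative weights $\lambda_1,\ldots,\lambda_k\ge 0$. Inside the supremum defining $\|\cdot\|_{\ell_{q_i}^w(E_i)}$, Hölder's inequality with conjugate exponents $(s_i/q_i,\,p_i/q_i)$ gives
\[
\bigl\|(\lambda_j^{1/s_i}x_j^i)_{j=1}^k\bigr\|_{\ell_{q_i}^w(E_i)}\le\Bigl(\sum_{j=1}^k\lambda_j\Bigr)^{1/s_i}\bigl\|(x_j^i)_{j=1}^k\bigr\|_{\ell_{p_i}^w(E_i)}.
\]
Applying the $(q;q_1,\ldots,q_n)$-summability hypothesis to the rescaled families and pulling out $\lambda_j^{\sum_i 1/s_i}=\lambda_j^{1/s}$ from $A$ by multilinearity yields
\[
\Bigl(\sum_{j=1}^k\lambda_j^{q/s}\|A(x_j^1,\ldots,x_j^n)\|^{q}\Bigr)^{\!1/q}\le \pi_{(q;q_1,\ldots,q_n)}(A)\Bigl(\sum_{j=1}^k\lambda_j\Bigr)^{\!1/s}\prod_{i=1}^n\bigl\|(x_j^i)_{j=1}^k\bigr\|_{\ell_{p_i}^w(E_i)}.
\]

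The final step is the self-extremizing choice $\lambda_j:=\|A(x_j^1,\ldots,x_j^n)\|^p$ (discarding indices with $A(x_j^1,\ldots,x_j^n)=0$). The identity $\frac{1}{q}=\frac{1}{p}+\frac{1}{s}$ forces $\lambda_j^{q/s}\|A(x_j^1,\ldots,x_j^n)\|^q=\|A(x_j^1,\ldots,x_j^n)\|^p=\lambda_j$, so both $\sum_j$ sums collapse to $\sum_j\|A(x_j^1,\ldots,x_j^n)\|^p$, and dividing by the common factor yields the desired inequality with constant $\pi_{(q;q_1,\ldots,q_n)}(A)$.

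The main obstacle is the bookkeeping for degenerate cases: if $p=q$ then $1/s=0$ and the Hölder step reduces to the trivial identity $\|\lambda_j^0 x_j^i\|=\|x_j^i\|$; the case $p_i=q_i$ is absorbed via the convention $1/\infty=0$; the value $p=\infty$ requires replacing the $\ell_p$-norm by a supremum and handling the self-extremizing step separately; and in the quasi-Banach range $0<q<1$ one must verify that Hölder's inequality still applies to the scalar sums appearing inside the defining supremum, which it does. No new ideas are needed in these cases.
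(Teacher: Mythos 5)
The paper does not prove this theorem; it is quoted as known and attributed to \cite[Proposition 3.3]{Thesis}. Your argument is correct and is the standard proof of the inclusion theorem (the same H\"older--rescaling--self-extremization scheme used in the cited source and, in the linear case, in Diestel--Jarchow--Tonge): the reduction to the equality case via the intermediate exponent $\tilde p$ is sound, the H\"older step with conjugate exponents $(s_i/q_i,p_i/q_i)$ is valid also for $q_i<1$ since both exponents are $\geq 1$, and the choice $\lambda_j=\|A(x_j^1,\ldots,x_j^n)\|^p$ closes the estimate with the constant $\pi_{(q;q_1,\ldots,q_n)}(A)$, giving exactly the claimed norm inequality. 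The degenerate cases you flag ($p=q$, $p_i=q_i$, $p=\infty$) are indeed routine as you describe.
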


\medskip If $\frac{1}{p}=\frac{1}{p_{1}}+\cdots+\frac{1}{p_{n}}$, absolutely
$(p;p_{1},\ldots,p_{n})$-summing $n$-linear mappings are usually called
\textit{$(p_{1},\ldots,p_{n})$-dominated}. They also satisfy a factorization
result (see \cite[Theorem 13]{Pie}).

For the basic theory of type and cotype in Banach spaces we refer to
\cite[Chapter 11]{Diestel}.

%We recall the fundamentals on type and cotype. For $1\leq p\leq2$
%(respectively, $q\geq2$), a Banach space $E$ is said to have
%\textsl{(Rademacher) type $p$\/} (respectively, \textsl{(Rademacher) cotype
%$q$\/}) if there exists a constant $C>0$ such that
%\[
%\int_{0}^{1}||\sum_{j=1}^{n}x_{j}r_{j}(t)||dt\leq C\left(  \sum_{j=1}^{n}\Vert
%x_{j}\Vert^{p}\right)  ^{1/p}%
%\]
%(respectively,
%\[
%\left(  \sum_{j=1}^{n}\Vert x_{j}\Vert^{q}\right)  ^{1/q}\leq C\int_{0}%
%^{1}||\sum_{j=1}^{n}x_{j}r_{j}(t)||dt\text{)}%
%\]
%for any finite family $x_{1},x_{2},\dots x_{n}$ of vectors in $E$.

%When $\|\cdot\|_{Rad(E)}$ is replaced by $\|\cdot\|_{\ell_1^w(E)}$ we get the following notion for
A Banach space $E$ is said to have the \textsl{Orlicz property $q$\/}, $2\le
q<\infty$, if
%there exists a constant $C>0$ such that
%\[
%\left(  \sum_{j=1}^{n}\Vert x_{j}\Vert^{q}\right)  ^{1/q}\leq %C\|(x_j)\|_{\ell_1^w(E)}
%\]
%for any finite family $x_{1},x_{2},\dots x_{n}$ of vectors in $E$.
%Of course,  this means that
the identity operator $Id \colon E\longrightarrow E$ is absolutely
$(q,1)$-summing, that is, if $\ell_{1}^{w}(E) \subseteq\ell_{q}(E)$. Clearly
cotype $q$ implies Orlicz property $q$. Some deep results by M. Talagrand
\cite{T1,T2} show that for $q>2$ both notions actually coincide while this is
not the case for $q=2$.

Some of the basic questions in the theory of absolutely summing operators are
to analyze when $\Pi_{(p_{1};q_{1})}(X,Y)=\Pi_{(p_{2};q_{2})}(X,Y)$ or when
$\mathcal{L}(X,Y)=\Pi_{(p;q)}(X,Y)$. Typical results in this direction are
that $\Pi_{(1;1)}(X,Y)=\Pi_{(2;2)}(X,Y)$ for any space $Y$ of cotype 2 or the
fundamental Grothendieck's theorem:
\begin{equation}
{\mathcal{L}}(\ell_{1};\ell_{2})=\Pi_{(1;1)}(\ell_{1};\ell_{2})\quad
\hbox{ or, equivalently, }\quad{\mathcal{L}}(c_{0};\ell_{1})=\Pi_{(2;2)}%
(c_{0};\ell_{1}). \label{gt}%
\end{equation}
%and Kwapien's theorem \cite{K} which extends the result to $\ell^p$ for $p\ne %2$:
%\begin{equation}\label{kw}
%{\mathcal{L}}(\ell_{1};\ell_{p})=\Pi_{(r;1)}(\ell_{1};\ell_{p}), \quad %\frac{1}{r}= 1- |\frac{1}{2}-\frac{1}{p}|.
%\end{equation}
%Some times one needs to restrict to smaller classes, for instance the coincidence %with almost summing operators (see \cite[Theorem 12.10]{Diestel}.
%\begin{equation}\label{ast}
%{\mathcal{L}}(\ell_{1};X)=\Pi_{as}(\ell_{1};X) \Longleftrightarrow X \hbox{ has %type } 2.
%\end{equation}
%These kind of results inspire new properties in Banach space theory.
Besides the notion of cotype, the property which plays an important role in
our development is the so called \textit{Littlewood-Orlicz property}. A Banach
space $E$ has the \textit{Littlewood-Orlicz property} if $\ell_{1}%
^{w}(E)\subseteq\ell_{2}\langle E\rangle$, that is, $Id\colon E\longrightarrow
E$ is Cohen $(2,1)$-nuclear or the duality map $A_{I}\colon E\times E^{\prime
}\longrightarrow\mathbb{K}$ is $(1;1,2)$-summing. The reader is referred to
\cite[Section 4]{Bu} for a related concept of Littlewood-Orlicz operator. This
is considered in Section 5.

\section{Weakly summing and Cohen nuclear multilinear mappings}

\label{D}

The bilinear form
\[
A\colon\ell_{2}\times\ell_{2}\longrightarrow\mathbb{K}~,~A((\alpha_{j}%
)_{j},(\beta_{j})_{j})=\sum_{j}\alpha_{j}\beta_{j},
\]
is bounded, but $A\notin\Pi_{(1;1,2)}(E_{1},E_{2};\mathbb{K})$ because
$(e_{j})_{j}\in\ell_{2}^{w}(\ell_{2})$, $(\frac{e_{j}}{j})_{j}\in\ell_{1}%
^{w}(\ell_{2})$ and $(A(e_{j},\frac{e_{j}}{j}))_{j}=(\frac{1}{j}e_{j}%
)_{j}\notin\ell_{1}$. This example suggests that no generalization of the
Defant-Voigt theorem can be expected for general $p,p_{1},\ldots,p_{n}$ other
than $p=p_{1}=\cdots=p_{n}=1$. However, let us see that whenever $\frac
{1}{p_{1}}+\frac{1}{p_{2}}+\cdots+\frac{1}{p_{n}}-n+1\geq\frac{1}{p}$ then we
can get a coincidence result for $(p;p_{1},\ldots,p_{n})$-summing $n$-linear
functionals. Moreover, in order to extend such a result to the vector valued
case, we need to consider weakly $(p;p_{1},\ldots,p_{n})$-summing operators.
The essence of the concept of weakly summing multilinear operators, as far as
we know, has its roots in \cite{soares}:

%As in the case of linear operators, in the multilinear setting several variants of the concept of absolutely summing mappings have been studied. In this section we introduce and study two such variants. %we can consider not only the notion of $(p;p_1,\cdots,p_n)$-summing multilinear map but some other related notions.

\begin{definition}\rm
\label{def1}For $n\in\mathbb{N}$, let $0<p,p_1,\ldots,p%
_n<\infty$ be such that  $\frac{1}{p_1}+\frac{1}{p_2%
}+\cdots+\frac{1}{p_n}\geq\frac{1}{p}.$
 We say that $A\in\mathcal{L}%
(E_1,\ldots,E_n;F)$ is {\it weakly $(p;p_1,\ldots,p_n)$-summing} if
the induced mapping $\widehat{A}\colon\ell_{p_1}^{w}(E_1)\times\ell
_{p_2}^{w}(E_2)\times\cdots\times\ell_{p_n}^{w}(E_n)\longrightarrow
\ell_{p}^{w}(F)$ is well-defined (hence $n$-linear and bounded). The space
formed by these mappings is denoted by $\Pi_{w(p;p_1,\ldots,p_n)}
(E_1,\ldots,E_n;F)$ and the weakly $(p;p_1,\ldots,p_n)$-summing norm
(p-norm if $0<p<1$) $\pi_{w(p;p_1,\ldots,p_n)}(A)$ of
$A$ is defined as the norm (p-norm if $0<p<1$) of
$\widehat{A}$ as an operator from $\ell_{p_{1}}^{w}(E_{1})\times\ell_{p_{2}%
}^{w}(E_{2})\times\cdots\times\ell_{p_{n}}^{w}(E_{n})$ to $\ell_{p}^{w}(F)$.
\end{definition}

%\begin{proposition}
%\label{wascaract} Let $n\in{\mathbb{N}}$,  $\frac{1}{q_{1}}+\frac{1}{q_{2}}+\cdots+\frac{1}{q_{n}}%
%\geq\frac{1}{p}$ and let $E_{1},\ldots,E_{n}.F$ be Banach
%spaces. Then
%$\Pi_{w(p;q_{1},\ldots,q_{n})}
%(E_{1},\ldots,E_{n};F)$ is a closed subspace of $ \mathcal L(F';\Pi_{(p;q_{1},\ldots,q_{n})}
%(E_{1},\ldots,E_{n};\mathbb K)).$
%\end{proposition}

%\begin{proof} Given $A\in\Pi_{w(p;q_{1},\ldots,q_{n})}
%(E_{1},\ldots,E_{n};F)$, consider its adjoint
%$$A'\colon F'\longrightarrow \Pi_{(p;q_{1},\ldots,q_{n})}
%(E_{1},\ldots,E_{n};\mathbb K)~, ~A'(\varphi)= \varphi\circ A.$$
%Let $(x_{j}^{k})_{j=1}^\infty\in \ell_{q_{k}}^{w}(E_{k}),$ $k=1,\ldots,n,$ and $\varphi\in F'$ be given. Then  %
%\begin{align*}
%\|A(x_{j}^{1},\ldots,x_{j}^{n})\|_{\ell_p^w(F)}&=\sup_{\varphi\in B_{F'}}\left( \sum_j \left\vert \varphi\circ A(x_{j}^{1},%\ldots,x_{j}^{n})\right\vert^{p}\right)^{1/p}.
%\end{align*}
%Taking supremum over $\|(x_{j}^{k})_{j=1}^\infty\| _{\ell^w_{q_{k}}(E_k)}=1$ for $k=1,\cdots, n$ and in
%interchanging supremum we have
%$$\pi_{w(p;q_{1},\ldots,q_{n})}(A)= \sup_{\varphi\in B_{F'}} \pi_{(p;q_{1},\ldots,q_{n})}(A'(\varphi)).$$
%\end{proof}

An example of coincidence situation involving weakly summing operators can be
found in \cite[Proposition 13]{popaI}: if $1<p,p_{1},...,p_{n}<\infty$ are such
that%
\[
\frac{1}{p^{\prime}}=\frac{1}{p_{1}^{\prime}}+\cdots+\frac{1}{p_{n}^{\prime}}%
\]
then%
\[
\mathcal{L}(E_{1},\ldots,E_{n};F)=\Pi_{w(p;p_{1},\ldots,p_{n})}(E_{1}%
,\ldots,E_{n};F).
\]

\begin{proposition}
\label{wascaract} Let $n\in{\mathbb{N}}$ and $0<p,p_{1},...,p_{n}<\infty$ be such that
$\frac{1}{p_{1}}+\frac{1}{p_{2}}+\cdots+\frac{1}{p_{n}}\geq\frac{1}{p}$, and
let $E_{1},\ldots,E_{n},F$ be Banach spaces. Then the spaces
\[
\Pi_{w(p;p_{1},\ldots,p_{n})}(E_{1},\ldots,E_{n};F^{\prime})
\]
and%
\[
\mathcal{L}(F,\Pi_{(p;p_{1},\ldots,p_{n})}(E_{1},\ldots,E_{n};\mathbb{K}))
\]
are isometrically isomorphic.
\end{proposition}

\begin{proof}
Given $A\in\Pi_{w(p;p_{1},\ldots,p_{n})}(E_{1},\ldots,E_{n};F^{\prime})$,
consider the linear operator
\[
\Phi_{A}\colon F\longrightarrow\Pi_{(p;p_{1},\ldots,p_{n})}(E_{1},\ldots
,E_{n};\mathbb{K})~,~\Phi_{A}(y)(x_{1},\ldots,x_{n})=\langle A(x_{1}%
,\ldots,x_{n}),y\rangle,
\]
for any $x_{k}\in E_{k}$, $k=1,\ldots,n$, and $y\in F$. Let $(x_{j}^{k}%
)_{j=1}^{\infty}\in\ell_{p_{k}}^{w}(E_{k}),$ $k=1,\ldots,n,$ and $y\in F$.
Then
\begin{align*}
\Vert(\Phi_{A}(y)(x_{j}^{1},\ldots,x_{j}^{n}))_{j}\Vert_{\ell_{p}} &  =\left(
\sum_{j}\left\vert \langle A(x_{j}^{1},\ldots,x_{j}^{n}),y\rangle\right\vert
^{p}\right)  ^{1/p}\\
&  \leq\pi_{w(p;p_{1},\ldots,p_{n})}(A)\cdot\Vert y\Vert\cdot\left\Vert
(x_{j}^{1})_{j=1}^{\infty}\right\Vert _{\ell_{p_{1}}^{w}(E_{1})}%
\cdots\left\Vert (x_{j}^{n})_{j=1}^{\infty}\right\Vert _{\ell_{p_{n}}%
^{w}(E_{n})}.
\end{align*}
Hence $\pi_{(p;p_{1},\ldots,p_{n})}(\Phi_{A}(y))\leq\pi_{w(p;p_{1}%
,\ldots,p_{n})}(A)\Vert y\Vert$ and therefore $\Vert\Phi_{A}\Vert\leq
\pi_{w(p;p_{1},\ldots,p_{n})}(A).$ Let now $\Phi\colon F\longrightarrow
\Pi_{(p;p_{1},\ldots,p_{n})}(E_{1},\ldots,E_{n};\mathbb{K})$ be given.
Consider the multilinear mapping
\[
A_{\Phi}\colon E_{1}\times\cdots\times E_{n}\longrightarrow F^{\prime
}~,~\langle A_{\Phi}(x_{1},\ldots,x_{n}),y\rangle=\Phi(y)(x_{1},\ldots,x_{n}),
\]
for any $x_{k}\in E_{k}$, $k=1,\cdots,n$, and $y\in F$. From
\begin{align*}
\Vert(A_{\Phi}(x_{j}^{1},\ldots,x_{j}^{n}))_{j}\Vert_{\ell_{p}^{w}(F^{\prime
})} &  =\sup_{\varphi\in B_{F^{\prime\prime}}}\left(  \sum_{j}\left\vert
\langle A_{\Phi}(x_{j}^{1},\ldots,x_{j}^{n}),\varphi\rangle\right\vert
^{p}\right)  ^{1/p}\\
&  =\sup_{y\in B_{F}}\left(  \sum_{j}\left\vert \Phi(y)(x_{j}^{1},\ldots
,x_{j}^{n})\right\vert ^{p}\right)  ^{1/p}\\
&  \leq\sup_{y\in B_{F}}\pi_{(p;p_{1},\ldots,p_{n})}(\Phi(y))\left\Vert
(x_{j}^{1})_{j=1}^{\infty}\right\Vert _{\ell_{p_{1}}^{w}(E_{1})}%
\cdots\left\Vert (x_{j}^{n})_{j=1}^{\infty}\right\Vert _{\ell_{p_{n}}%
^{w}(E_{n})}\\
&  {=}\Vert\Phi\Vert\cdot\left\Vert (x_{j}^{1})_{j=1}^{\infty}\right\Vert
_{\ell_{p_{1}}^{w}(E_{1})}\cdots\left\Vert (x_{j}^{n})_{j=1}^{\infty
}\right\Vert _{\ell_{p_{n}}^{w}(E_{n})},
\end{align*}
we conclude that $\pi_{w(p;p_{1},\ldots,p_{n})}(A_{\Phi})\leq\Vert\Phi\Vert$.
\end{proof}

In general, for $\frac{1}{p}\leq\frac{1}{p_{1}}+\frac{1}{p_{2}}+\cdots
+\frac{1}{p_{n}}<\frac{1}{p}+n-1$ one has that $\Pi_{w(p;p_{1},\ldots,p_{n}%
)}(E_{1},\ldots,E_{n};F)\subsetneq\mathcal{L}(E_{1},\ldots,E_{n};F)$, as shows
the example at the beginning of the section, where $n=2$, $E_{1}=E_{2}%
=\ell_{2}$ and $F=\mathbb{K}$.

To analyze when
\[
{\mathcal{L}}(E_{1},E_{2},\ldots,E_{n};F)=\Pi_{w(p;p_{1},p_{2},\ldots,p_{n}%
)}(E_{1},E_{2},\ldots,E_{n};F),
\]
for some values of $0\leq1/p_{1}+\cdots+1/p_{n}-1/p<n-1$ and some Banach
spaces $E_{1},\ldots,E_{n},F$, the consideration of projective tensor products
is quite profitable for our purposes. Recall that by $E_{1}\hat{\otimes}_{\pi
}\cdots\hat{\otimes}_{\pi}E_{n}$ we mean the completed projective tensor
product of the Banach spaces $E_{1},\ldots,E_{n}$. The following lemma will be
useful along this paper; part of it can be essentially found in \cite{soares}.

\begin{lemma}
\label{was} Let $n\in{\mathbb{N}}$ and $0<p,p_{1},...,p_{n}<\infty$ be such that $\frac{1}%
{p}\leq\frac{1}{p_{1}}+\frac{1}{p_{2}}+\cdots+\frac{1}{p_{n}}<\frac{1}{p}+n-1$,
and let $E_{1},\ldots,E_{n}$ be Banach spaces. The following are
equivalent:\newline
{\rm (i)} $\mathcal{L}(E_{1},\ldots,E_{n};F)=\Pi
_{w(p;p_{1},\ldots,p_{n})}(E_{1},\ldots,E_{n};F)$ and $\pi_{w(p;p_{1}%
,\ldots,p_{n})}\leq C\Vert\cdot\Vert$ for every Banach space $F$%
.\newline
{\rm (ii)} There exists $C>0$ such that
\[
\Vert(x_{j}^{1}\otimes\cdots\otimes x_{j}^{n})_{j=1}^{\infty}\Vert_{\ell
_{p}^{w}(E_{1}\hat{\otimes}_{\pi}\cdots\hat{\otimes}_{\pi}E_{n})}\leq
C\prod_{i=1}^{n}\Vert(x_{j}^{i})_{j=1}^{\infty}\Vert_{\ell_{p_{i}}^{w}(E_{i}%
)}.
\]
for all sequences $(x_{j}^{i})_{j=1}^{\infty}\in\ell_{p_{i}}^{w}(E_{i})$,
$i=1,\ldots,n$.\newline
{\rm (iii)} $\mathcal{L}(E_{1},\ldots,E_{n}%
;\mathbb{K})=\Pi_{(p;p_{1},\ldots,p_{n})}(E_{1},\ldots,E_{n};\mathbb{K})$ and
and there exists $C>0$ such that $\pi_{(p;p_{1},\ldots,p_{n})}\leq C\Vert
\cdot\Vert.$\newline
\end{lemma}

\begin{proof}
\noindent(i) $\Longrightarrow$ (ii) Take $F=E_{1}\hat{\otimes}_{\pi}\cdots
\hat{\otimes}_{\pi}E_{n}$ and $A\colon E_{1}\times\cdots\times E_{n}%
\longrightarrow E_{1}\hat{\otimes}_{\pi}\cdots\hat{\otimes}_{\pi}E_{n}$ given
by $A(x_{1},\ldots,x_{n})=x_{1}\otimes\cdots\otimes x_{n}$.

 \medskip

\noindent(ii) $\Longrightarrow$ (iii) Given $A\in\mathcal{L}(E_{1}%
,\ldots,E_{n};\mathbb{K})$, its linearization $T\colon E_{1}\hat{\otimes}%
_{\pi}\cdots\hat{\otimes}_{\pi}E_{n}\longrightarrow\mathbb{K}$ is bounded and
then $\widehat{T}\colon\ell_{p}^{w}(E_{1}\hat{\otimes}_{\pi}\cdots\hat
{\otimes}_{\pi}E_{n})\longrightarrow\ell_{p}$ is bounded. Now, given sequences
$(x_{j}^{i})_{j=1}^{\infty}\in\ell_{p_{i}}^{w}(E_{i})$, $i=1,\ldots,n$,
\begin{align*}
\Vert(A(x_{j}^{1},\ldots,x_{j}^{n}))_{j=1}^{\infty}\Vert_{p} &  =\Vert
(T(x_{j}^{1}\otimes\cdots\otimes x_{j}^{n}))_{j=1}^{\infty}\Vert_{p}%
=\Vert\widehat{T}((x_{j}^{1}\otimes\cdots\otimes x_{j}^{n})_{j=1}^{\infty
})\Vert_{p}\\
&  \leq\Vert\widehat{T}\Vert\cdot\Vert(x_{j}^{1}\otimes\cdots\otimes x_{j}%
^{n})_{j=1}^{\infty}\Vert_{\ell_{p}^{w}(E_{1}\hat{\otimes}_{\pi}\cdots
\hat{\otimes}_{\pi}E_{n})}\\
&  \leq C\Vert T\Vert\cdot\prod_{i=1}^{n}\Vert(x_{j}^{i})_{j=1}^{\infty}%
\Vert_{\ell_{p_{i}}^{w}(E_{i})}.
\end{align*}

\medskip\noindent(iii) $\Longrightarrow$ (i) For $T\in\mathcal{L}(E_{1}%
,\ldots,E_{n};F)$ and $\varphi\in F^{\prime}$ we have $\varphi\circ
T\in\mathcal{L}(E_{1},\ldots,E_{n};\mathbb{K})=\Pi_{(p;p_{1},\ldots,p_{n}%
)}(E_{1},\ldots,E_{n};\mathbb{K})$ and the result follows straightforwardly.
\end{proof}

A direct consequence of the previous lemma and the Inclusion Theorem is that
if $p,p_{k}\geq1$ for $k=1,\ldots,n$ are such that $\frac{1}{p_{1}}+\frac
{1}{p_{2}}+\cdots+\frac{1}{p_{n}}-n+1\geq\frac{1}{p},$ then
\[
\mathcal{L}(E_{1},\ldots,E_{n};F)=\Pi_{w(p;p_{1},\ldots,p_{n})}(E_{1}%
,\ldots,E_{n};F)
\]
for all $E_{1},...,E_{n},F.$ So, weakly $(p;p_{1},\ldots,p_{n})$-summing
operators are a variant of the concept of $(p;p_{1},\ldots,p_{n})$-summing
operators that shed light on the summability properties for operators. Let us
introduce another variant of the notion of summing multilinear operator (for a
related notion we refer to \cite{achour}).

\begin{definition}\rm
{\label{def2}Let }$1\leq p,p_1,\ldots,p%
_n<\infty$  and $\frac{1}{p_1}+\frac{1}{p_2%
}+\cdots+\frac{1}{p_n}\geq\frac{1}{p}.$ We say that $A\in\mathcal{L}%
(E_1,\ldots,E_n;F)$ is {\it Cohen $(p;p_1,\ldots,p_n)$-nuclear} if
the induced mapping $\widehat{A}\colon\ell_{p_{1}}^{w}(E_{1})\times\ell
_{p_{2}}^{w}(E_{2})\times\cdots\times\ell_{p_{n}}^{w}(E_{n})\longrightarrow
\ell_{p}\langle F\rangle$ is well-defined (hence $n$-linear and bounded). The
space formed by these mappings is denoted by $CN_{(p;p_{1},\ldots,p_{n}%
)}(E_{1},\ldots,E_{n};F)$ and the Cohen $(p;p_{1},\ldots,p_{n})$-nuclear norm
$\Vert A\Vert_{CN(p;p_{1},\ldots,p_{n})}$ is defined as the norm of
$\widehat{A}$ as operator from $\ell_{p_{1}}^{w}(E_{1})\times\ell_{p_{2}}%
^{w}(E_{2})\times\cdots\times\ell_{p_{n}}^{w}(E_{n})$ to $\ell_{p}\langle
F\rangle$.
\end{definition}

Of course
\[
CN_{(p;p_{1},\ldots,p_{n})}(E_{1},\ldots,E_{n};F)\subseteq\Pi_{(p;p_{1}%
,\ldots,p_{n})}(E_{1},\ldots,E_{n};F)\subseteq\Pi_{w(p;p_{1},\ldots,p_{n}%
)}(E_{1},\ldots,E_{n};F).
\]

\begin{theorem}
\label{cncaract} Let $n\in{\mathbb{N}}$ and $1\leq p,p_{1},...,p_{n}<\infty$ be such that
$\frac{1}{p_{1}}+\frac{1}{p_{2}}+\cdots+\frac{1}{p_{n}}\geq\frac{1}{p}$, and
let $E_{1},\ldots,E_{n}$ and $F$ be Banach spaces. Then the spaces
\[
CN_{(p;p_{1},\ldots,p_{n})}(E_{1},\ldots,E_{n};F^{\prime})
\]
and
\[
\Pi_{(1;p_{1},\ldots,p_{n},p^{\prime})}(E_{1},\ldots,E_{n},F;\mathbb{K})
\]
are isometrically isomorphic.
\end{theorem}

\begin{proof}
Clearly one has that%
\[
\mathcal{L}(E_{1},\ldots,E_{n},F;\mathbb{K})=\mathcal{L}(E_{1},\ldots
,E_{n};F^{\prime})
\]
with the identification $A\longleftrightarrow\tilde{A}$ given by
\[
A(x_{1},\ldots,x_{n},z)=\langle\tilde{A}(x_{1},\ldots,x_{n}),z\rangle
\]
for $x_{k}\in E_{k}$, $k=1,\ldots,n$ and $z\in F$. Given $\tilde{A}\in
CN_{(p;p_{1},\ldots,p_{n})}(E_{1},\ldots,E_{n},F^{\prime})$, take $(x_{j}%
^{k})_{j=1}^{\infty}\in\ell_{p_{k}}^{w}(E_{k}),$ $k=1,\ldots,n,$ and
$(z_{j})_{j=1}^{\infty}\in\ell_{p^{\prime}}^{w}(F).$ Then%
\begin{align*}
\sum_{j}\left\vert A(x_{j}^{1},\ldots,x_{j}^{n},z_{j})\right\vert =  &
\sum_{j}\left\vert \langle\tilde{A}(x_{j}^{1},\ldots,x_{j}^{n}),z_{j}%
\rangle\right\vert \\
&  \leq\Vert(\tilde{A}(x_{j}^{1},\ldots,x_{j}^{n}))_{j}\Vert_{\ell_{p}\langle
F^{\prime}\rangle}\cdot\Vert(z_{j})_{j=1}^{\infty}\Vert_{\ell_{p^{\prime}}%
^{w}(F)}\\
&  \leq\Vert\tilde{A}\Vert_{CN(p;p_{1},\ldots,p_{n})}\cdot\Vert(x_{j}%
^{1})_{j=1}^{\infty}\Vert_{\ell_{p_{1}}^{w}(E_{1})}\cdots\Vert(x_{j}%
^{n})_{j=1}^{\infty}\Vert_{\ell_{p_{n}}^{w}(E_{n})}\cdot\Vert(z_{j}%
)_{j=1}^{\infty}\Vert_{\ell_{p^{\prime}}^{w}(F)}.
\end{align*}
Hence $A\in\Pi_{(1;p_{1},\ldots,p_{n},p^{\prime})}(E_{1},\ldots,E_{n}%
,F;\mathbb{K})$ and $\pi_{(1;p_{1},\ldots,p_{n},p^{\prime})}(A)\leq\Vert
\tilde{A}\Vert_{CN(p;p_{1},\ldots,p_{n})}.$ Assume now that $A\in\Pi
_{(1;p_{1},\ldots,p_{n},p^{\prime})}(E_{1},\ldots,E_{n},F;\mathbb{K})$ and let
$(x_{j}^{k})_{j=1}^{\infty}\in\ell_{p_{k}}^{w}(E_{k}),$ $k=1,\ldots,n,$ and
$(z_{j})_{j=1}^{\infty}\in\ell_{p^{\prime}}^{w}(F)$ be given. Then%
\begin{align*}
\sum_{j}\left\vert \langle\tilde{A}(x_{j}^{1},\ldots,x_{j}^{n}),z_{j}%
\rangle\right\vert =  &  \sum_{j}\left\vert A(x_{j}^{1},\ldots,x_{j}^{n}%
,z_{j})\right\vert \\
&  \leq\pi_{(1;p_{1},\ldots,p_{n},p^{\prime})}(A)\cdot\Vert(x_{j}^{1}%
)_{j=1}^{\infty}\Vert_{\ell_{p_{1}}^{w}(E_{1})}\cdots\Vert(x_{j}^{n}%
)_{j=1}^{\infty}\Vert_{\ell_{p_{n}}^{w}(E_{n})}\cdot\Vert(z_{j})_{j=1}%
^{\infty}\Vert_{\ell_{p^{\prime}}^{w}(F)}.
\end{align*}
Hence $\tilde{A}\in CN_{(p;p_{1},\ldots,p_{n})}(E_{1},\ldots,E_{n};F^{\prime
})$ and $\Vert\tilde{A}\Vert_{CN(p;p_{1},\ldots,p_{n})}\leq\pi_{(1;p_{1}%
,\ldots,p_{n},p^{\prime})}(A).$
\end{proof}

\begin{corollary}
Let $n\in{\mathbb{N}}$, $1\leq p,p_{1},...,p_{n}<\infty,$ $\frac{1}{p_{1}%
}+\frac{1}{p_{2}}+\cdots+\frac{1}{p_{n}}\geq1$, and let $E_{1},\ldots,E_{n}$
and $F$ be Banach spaces. Then
\[
{\mathcal{L}}(F;CN_{(p_{n}^{\prime};p_{1},\ldots,p_{n-1})}(E_{1},\ldots
,E_{n};E_{n}^{\prime}))
\]
and%
\[
\Pi_{w(1;p_{1},\ldots,p_{n})}(E_{1},\ldots,E_{n};F^{\prime})
\]
are isometrically isomorphic.
\end{corollary}

\begin{proof}
It follows from  Theorem \ref{cncaract} % $ {\mathcal L}(F;CN_{(q_n';q_{1},\ldots,q_{n-1})}(E_{1},\ldots,E_{n}; E_n'))= {\mathcal L}(F;\Pi_{(1;q_{1},\ldots,q_{n})} (E_{1},\ldots,E_{n};\mathbb K))$
and  Proposition \ref{wascaract}.
% this space coincide with $\Pi_{w(1;q_{1},\ldots,q_{n})} (E_{1},\ldots,E_{n};F').$
\end{proof}

\section{Coincidences between spaces of summing multilinear mappings}

\label{lift}

It is well known that cotype plays a fundamental role in coincidence results
for linear and nonlinear operators. For example, if $E$ has cotype $2$ then
$\Pi_{(1,1)}(E;F)=\Pi_{(2,2)}(E;F)$ for any Banach space $F$ \cite[Corollary
11.16(a)]{Diestel}.
%or, if $F$ has cotype $2$ then $\mathcal{L}(c_0;f)=\Pi_{2;2}(c_0;F))$ (see \cite[page 224]{Diestel}).
The first attempt to lift this result to the multilinear setting yields the
following result (see \cite[Corollary 3.7(a)(i)]{pop}). The polynomial version appeared in \cite[Theorem 16]{MT}).

\begin{proposition}{\rm \cite[Corollary 3.7(a)(i)]{pop}}
Let $E_{1}, \ldots, E_{n}$ be cotype 2 spaces. Then
\[
\Pi_{(\frac{1}{n};1,\ldots,1)}(E_{1},\ldots,E_{n};F) = \Pi_{(\frac{2}%
{n};2,\ldots,2)}(E_{1},\ldots,E_{n};F)
\]
for every Banach space $F$.
\end{proposition}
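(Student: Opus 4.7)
The plan is to exploit the key observation that both classes in question are classes of \emph{dominated} multilinear mappings, since $\frac{1}{1/n} = n = \underbrace{\frac{1}{1}+\cdots+\frac{1}{1}}_{n}$ and $\frac{1}{2/n} = \frac{n}{2} = \underbrace{\frac{1}{2}+\cdots+\frac{1}{2}}_{n}$. This makes Theorem \ref{factorizationtheorem} (Factorization Theorem) available to us in both directions, which is the main engine of the proof.

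The inclusion $\Pi_{(1/n;1,\ldots,1)}(E_{1},\ldots,E_{n};F) \subset \Pi_{(2/n;2,\ldots,2)}(E_{1},\ldots,E_{n};F)$ is immediate from the Inclusion Theorem: indeed $1/n \leq 2/n$, $1 \leq 2$, and both sides compute to $0$ in the arithmetic condition $\frac{1}{q_1}+\cdots+\frac{1}{q_n}-\frac{1}{q} \leq \frac{1}{p_1}+\cdots+\frac{1}{p_n}-\frac{1}{p}$.

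For the reverse inclusion, given $A \in \Pi_{(2/n;2,\ldots,2)}(E_{1},\ldots,E_{n};F)$, I would apply the Factorization Theorem (with $p=2/n$ and $p_i=2$) to write
\[
A = B \circ (u_{1},\ldots,u_{n})
\]
for some Banach spaces $G_{1},\ldots,G_{n}$, operators $u_{j} \in \Pi_{(2;2)}(E_{j};G_{j})$, and $B \in \mathcal{L}(G_{1},\ldots,G_{n};F)$. Now invoke the cited linear consequence of cotype $2$: since each $E_j$ has cotype $2$, the identity $\Pi_{(1;1)}(E_{j};G_{j}) = \Pi_{(2;2)}(E_{j};G_{j})$ holds, and therefore each $u_j$ is actually absolutely $(1;1)$-summing. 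Applying the Factorization Theorem in the other direction (with $p=1/n$ and $p_i=1$) to the factorization $A = B \circ (u_{1},\ldots,u_{n})$ then yields $A \in \Pi_{(1/n;1,\ldots,1)}(E_{1},\ldots,E_{n};F)$, as desired.

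No real obstacle is expected; the only subtlety is recognizing that both classes are classes of dominated mappings so that Theorem \ref{factorizationtheorem} applies to both sides. Once this is noticed, the linear cotype $2$ coincidence transfers to the multilinear level simply by updating the summing index of each factor in the factorization.
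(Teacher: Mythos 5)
Your proposal is correct and follows essentially the same argument as the paper: one inclusion via the Inclusion Theorem, and the reverse by factoring $A = B\circ(u_1,\ldots,u_n)$ through $(2;2)$-summing operators, upgrading each $u_j$ to $(1;1)$-summing via the cotype $2$ coincidence, and reassembling with the Factorization Theorem. Your explicit remark that both classes consist of dominated mappings (so the Factorization Theorem applies on both sides) is exactly the point the paper uses implicitly.
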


\begin{corollary}
Let $E_{1}, \ldots, E_{n}$ be cotype 2 spaces. Then
\[
\Pi_{w(\frac{1}{n};1,\ldots,1)}(E_{1},\ldots,E_{n};F) = \Pi_{w(\frac{2}%
{n};2,\ldots,2)}(E_{1},\ldots,E_{n};F)
\]
for every Banach space $F$.
\end{corollary}

Recently, in \cite[Theorem 3 and Remark 2]{Junek}, \cite[Corollary 4.6]{pop}
and \cite[Theorem 3.8 (ii)]{michels} it was proved that if $E_{1},\ldots
,E_{n}$ are Banach spaces with cotype $c$ and $n\geq2$ then:

(i) If $c=$ $2,$ then
\begin{equation}
\Pi_{\left(  q;q,\ldots,q\right)  }(E_{1},\ldots,E_{n};F)\subseteq\Pi_{\left(
p;p,\ldots,p\right)  }(E_{1},\ldots,E_{n};F) \label{nnbb}%
\end{equation}
holds for $1\leq p\leq q\leq2$ and arbitrary Banach spaces $F$.

(ii) If $c>2,$ then
\begin{equation}
\Pi_{\left(  q;q,\ldots,q\right)  }(E_{1},\ldots,E_{n};F)\subseteq\Pi_{\left(
p;p,\ldots,p\right)  }(E_{1},\ldots,E_{n};F) \label{nnbb2}%
\end{equation}
holds for $1\leq p\leq q<c^{\prime}$ and arbitrary Banach spaces $F$.

It is easy to see that the inclusions (\ref{nnbb}) and (\ref{nnbb2}) are not
optimal. So it is natural to ask for the best $s$ for which, under the same
assumptions,%
\[
\Pi_{\left(  q;q,\ldots,q\right)  }(E_{1},\ldots,E_{n};F)\subseteq\Pi_{\left(
s;p,\ldots,p\right)  }(E_{1},\ldots,E_{n};F).
\]
Let us settle this question and get a much better result in this direction.

We aim to prove a more general result. The key point is to work with spaces
for which we have the factorization $\ell_{p}^{w}(E)=\ell_{r} \ell_{s}^{w}(E)$
for $1/p=1/r+1/s$.

\begin{lemma}
\label{nuevolema} Let $1< p<r<\infty$ and let $E$ be a Banach space.\newline%
{\rm (a)} If $\mathcal{L}(\ell_{p^{\prime}};E)=\Pi_{(r;r)}(\ell_{p^{\prime
}};E)$, then $\ell_{p}^{w}(E)=\ell_{r}\ell_{s}^{w}(E)$, where $1/r+1/s=1/p$%
.\newline
{\rm (b)} If $\mathcal{L}(c_{0};E)=\Pi_{(r;r)}(c_{0};E)$, then
$\ell_{1}^{w}(E)=\ell_{r}\ell_{s}^{w}(E)$, where $1/r+1/s=1$.
\end{lemma}

\begin{proof} Take $(x_j)_{j=1}^\infty\in \ell_p^w(E)$.
Define $u\colon \ell_{p'}\longrightarrow E$ by $u(e_j)=x_j$, $j \in \mathbb{N}$. Since $u\in \Pi_{(r;r)}(\ell_{p'};E)$, by \cite[Lemma 2.23]{Diestel} there exist $(\alpha_j)_{j=1}^\infty\in \ell_{r}$ and $(y_j)_{j=1}^\infty\in
\ell_s^w(E)$ such that $u(e_j)=x_j=\alpha_j y_j$ for every $j$. The other case is similar.
\end{proof}

It is well known that if $E$ has cotype $2$ then $\mathcal{L}(c_{0}%
;E)=\Pi_{2;2}(c_{0};E))$ (see \cite[page 224]{Diestel}). Hence for cotype $2$
space one has $\ell_{1}^{w}(E)=\ell_{2}\ell_{2}^{w}(E)$. In order to
understand the use of cotype in factorization, let us recall the following
definitions from \cite[Sections 16.4, 20.1]{Pietsch}: Let $E$ be a Banach
space, $0<p\leq s\leq\infty$ and $r$ be determined by $1/r+1/s=1/p$.

$\bullet$ A sequence $(x_{j})_{j=1}^{\infty}$ in $E$ is called \emph{mixed
$(s,p) $-summable} if it can be written in the form $x_{j}=\alpha_{j}y_{j}$
with $(\alpha_{j})_{j=1}^{\infty}\in\ell_{r}$ and $(y_{j})_{j=1}^{\infty}%
\in\ell^{w}_{s}(E)$.
%Put
%$$
%m_{s,p}(x_i):=\inf \|(\alpha_i)_i\|_{\ell_r} \|(y_i)_i\|_{\ell^w_s(E)}
%$$

$\bullet$ An operator $u \colon E\longrightarrow F$ is called \emph{$(s,p)$%
-mixing} if every weakly $p$-summable sequence in $E$ is mapped into an
$(s,p)$-mixed summable sequence in $F$. ${\mathcal{M}}_{(s,p)}$ denotes the
ideal of $(s,p)$-mixing operators.

From the definition, if $id_{E}$ is the identity operator on the Banach space
$E$, then $id_{E}\in{\mathcal{M}}_{(s,p)}(E;E)$ if and only if $\ell^{w}%
_{p}(E)=\ell_{r}\ell^{w}_{s}(E)$, where $1/r+1/s=1/p$. In \cite[Theorem 20.3.1
]{Pietsch} it is proved that, for $s\geq1$, ${\mathcal{M}}_{(s,p)}=\Pi
_{(s,s)}^{-1}\circ\Pi_{(p,p)}$ which relates, in a very strong way, mixing
operators with absolutely summing operators. Actually it says that the
identity $id_{E}$ is $(s,p)$-mixing if and only if $\Pi_{(s,s)}(E;F)=\Pi
_{(p,p)}(E;F)$ for all Banach space $F$. In other words, $\ell^{w}_{p}%
(E)=\ell_{r}\ell^{w}_{s}(E)$ if and only if $\Pi_{(s,s)}(E;F)=\Pi
_{(p,p)}(E;F)$ for all Banach space $F$.

Now, in order to find examples of Banach spaces $E$ for which $\ell^{w}%
_{p}(E)=\ell_{r}\ell^{w}_{s}(E)$, we look for cotype:

$\bullet$ If $E$ has cotype $2$ then $\Pi_{(s;s)}(E;F)=\Pi_{(p;p)}(E;F)$ for
every Banach space $F$, whenever $1\leq p\leq s\leq2$. In this case $\ell
^{w}_{p}(E)=\ell_{r}\ell^{w}_{s}(E)$.

$\bullet$ If $E$ has cotype $s$, $2<s<\infty$, then $\Pi_{(q;q)}%
(E;F)=\Pi_{(p;p)}(E;F)$ for every Banach space $F$, whenever $1\leq p\leq
q<s^{\prime}$. In this case $\ell^{w}_{p}(E)=\ell_{r}\ell^{w}_{q}(E)$.

Therefore one has the following result.

\begin{lemma}
\label{nuevolema} Let $E$ be a Banach space of cotype $2\leq s<\infty$, let
$p\leq q$ and $1/p=1/r+1/q$. Then\newline
{\rm (a)} $\ell_{p}^{w}%
(E)=\ell_{r}\ell_{q}^{w}(E)$ for $s=2$ and $1\leq p\leq q\leq2$.\newline%
{\rm(b)} $\ell_{p}^{w}(E)=\ell_{r}\ell_{q}^{w}(E)$ for $s>2$ and $1\leq
p\leq q<s^{\prime}$.
\end{lemma}

\begin{theorem}
\label{teornovo} For $i=1,\ldots,n$, let $E_{i}$ be a Banach space with cotype
$c_{i}\in\lbrack2,\infty]$. Let $1<p_{i},q_{i}<\infty$ with $1/r_{i}%
=1/p_{i}-1/q_{i}\geq0$, $p\leq q$ and $\frac{1}{p}=\sum_{i=1}^{n}\frac
{1}{r_{i}}+\frac{1}{q}$. Assume that
\[
1\leq p_{i}=q_{i}\text{ ~if }c_{i}=\infty,
\]%
\[
1\leq p_{i}\leq q_{i}\leq2\text{ ~if }c_{i}=2,
\]%
\[
1\leq p_{i}\leq q_{i}<c_{i}^{\prime}\text{ ~if }2<c_{i}<\infty.
\]
Then
\[
\Pi_{(q;q_{1},\ldots,q_{n})}(E_{1},\ldots,E_{n};F)=\Pi_{(p;p_{1},\ldots
,p_{n})}(E_{1},\ldots,E_{n};F),
\]
for every Banach space $F$. In particular, if each $c_{i}\in\lbrack2,\infty)$
and $1\leq p\leq q<\min\limits_{i}\{c_{i}^{\prime}\}$, then
\[
\Pi_{(q;q,\ldots,q)}(E_{1},\ldots,E_{n};F)=\Pi_{(\frac{qp}{n\left(
q-p\right)  +p};p,\ldots,p)}(E_{1},\ldots,E_{n};F)
\]
and
\[
\Pi_{(q;q,\ldots,q)}(E_{1},\ldots,E_{n};F)\subseteq\Pi_{(p;p,\ldots,p)}%
(E_{1},\ldots,E_{n};F).
\]

\end{theorem}

\begin{proof}
The inclusion \[
\Pi_{(p;p_1,\ldots,p_n)}(E_{1},\ldots,E_{n};F) \subseteq \Pi
_{(q;q_{1},\ldots,q_{n})}(E_{1},\ldots,E_{n};F)
\]
follows from the Inclusion Theorem \ref{inclusiontheorem}. Let us suppose that $A \in
\Pi_{(q;q_1,\ldots,q_n)}(E_{1},\ldots,E_{n};F)$ and let the sequences
$(x_j^i)_{j=1}^\infty \in \ell_{p_i}^w(E_i)$, $i = 1, \ldots, n$, be given. From Lemma \ref{nuevolema}
we know that $\ell_{p_i}^w(E_i) = \ell_{r_i} \ell_{q_i}^w(E_i)$, $i = 1, \ldots, n$. Hence there are sequences $(\alpha_j^i)_{j=1}^\infty \in \ell_{r_i}$ and $(y_j^i)_{j=1}^\infty
\in \ell_{q_i}^w(E_i)$ such that $(x_j^i)_{j=1}^\infty = (\alpha_j^i
y_j^i)_{j=1}^\infty$, $i = 1, \ldots, n$. In this fashion,
$(\alpha_j^1 \cdots \alpha_j^k)_{j=1}^\infty \in \ell_{r_1} \cdots
\ell_{r_n} = \ell_{r}$, where $1/r=\sum_{j=1}^n 1/r_j$, and $(A(y_j^1, \ldots,
y_j^n))_{j=1}^\infty \in \ell_{q}(F)$. Since
$\frac{1}{r} + \frac{1}{q}  =\frac{1}{p}$ it follows that
$$ (A(x_j^1, \ldots, x_j^n))_{j=1}^\infty =(\alpha_j^1 \cdots \alpha_j^n A(y_j^1, \ldots, y_j^n))_{j=1}^\infty
\in \ell_p(F).$$
\end{proof}

The above theorem is a variation of some well known results in this area
proved first by D. Popa in \cite[Theorem 1, Corollaries 2, 3 and 4]{popaarchiv}
and later by A. T. Bernardino \cite[Theorem 3.1]{thi} .

%%%%%%%%%%%%%%%%%%%%%%%%%%%%%%%%%%%%%%%

\section{Summability of all multilinear mappings}

\label{section3}

In Section \ref{D} we have observed some coincidence results for weakly
absolutely summing multilinear mappings. We are now interested in
understanding when
\[
{\mathcal{L}}(E_{1},E_{2},\ldots,E_{n};F)=\Pi_{(p;p_{1},p_{2},\ldots,p_{n}%
)}(E_{1},E_{2},\ldots,E_{n};F)
\]
for some values $p,p_{1},...,p_{n}$ and some Banach spaces $E_{1},\ldots
,E_{n},F$. First, we will show how to lift coincidence results from the
bilinear and trilinear case to the $n$-linear setting. Afterwards, we will
explore related notions as the cotype of a Banach space to get our purpose.

Let us now explain a procedure which allows to lift coincidence results of
bilinear maps to coincidence results of multilinear maps.

\begin{theorem}
\label{lifting}
%Let $n$ be a positive integer and let $m=[n/2]$.
Given $n \in\mathbb{N}$, $n \geq2$, let $m\in\mathbb{N}$ be such that $n = 2m$
if $n$ is even and $n = 2m +1$ if $n$ is odd. For $j=1,\ldots,n,$ let $E_{j}$
be a Banach space and let $1\leq r_{j}<\infty$, $p_{1},\ldots,p_{m}>0$ be such
that $1/p_{1}+\cdots+1/p_{m} > m-1$ and  $1/p_i\leq 1/r_{2i-1}+1/r_{2i}<1/p_i+1$, $i=1,\ldots,m$. Assume that
\[
\mathcal{L}(E_{2i-1},E_{2i};\mathbb{K})=\Pi_{(p_{i};r_{2i-1},r_{2i})}%
(E_{2i-1},E_{2i};\mathbb{K})
\]
for all $i=1,\ldots,m.$\newline
{\rm (i)} If $n$ is even, then
\[
\mathcal{L}(E_{1},\ldots,E_{n};\mathbb{K})=\Pi_{(p;r_{1},\ldots,r_{n})}%
(E_{1},\ldots,E_{n};\mathbb{K}),
\]
whenever $1/p\leq1/p_{1}+\cdots+1/p_{m} -m+1$. \newline
{\rm (ii)} If $n$ is
odd, then
\[
\mathcal{L}(E_{1},\ldots,E_{n};\mathbb{K})=\Pi_{(q;r_{1},\ldots,r_{n})}%
(E_{1},\ldots,E_{n};\mathbb{K}),
\]
whenever $1/p\leq1/p_{1}+\cdots+1/p_{m} -m+1$ and $1/q\leq1/r_{2m+1}+1/p-1.$
\end{theorem}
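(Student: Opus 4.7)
The plan is to reduce $A$ to a multilinear form of lower arity on projective tensor products of consecutive pairs, and then combine the Defant--Voigt Theorem with the Inclusion Theorem \ref{inclusiontheorem}. Set $F_i := E_{2i-1}\hat\otimes_\pi E_{2i}$ for $i=1,\ldots,m$. By iterating the universal property of the projective tensor product pair by pair, $A$ induces a continuous multilinear form $\widetilde A$ on $F_1\times\cdots\times F_m$ in case (i) and on $F_1\times\cdots\times F_m\times E_{2m+1}$ in case (ii), satisfying $\widetilde A(u_1\otimes v_1,\ldots,u_m\otimes v_m,\cdot)=A(u_1,v_1,\ldots,u_m,v_m,\cdot)$ and $\|\widetilde A\|=\|A\|$.

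The heart of the argument should be the translation of the bilinear coincidence into a weak-summability statement for tensor sequences. By the closed graph theorem there are constants $K_i>0$ with $\pi_{(p_i;r_{2i-1},r_{2i})}(B)\leq K_i\|B\|$ for every $B\in\mathcal{L}(E_{2i-1},E_{2i};\mathbb{K})$. Viewing such $B$ as norm-one functionals on $F_i$ and taking the supremum over $B\in B_{F_i^{*}}$ in the definition of $\ell_{p_i}^w(F_i)$, the bilinear summing inequality yields
\[
\bigl\|(x_j^{2i-1}\otimes x_j^{2i})_{j=1}^k\bigr\|_{\ell_{p_i}^w(F_i)} \leq K_i\,\bigl\|(x_j^{2i-1})_j\bigr\|_{\ell_{r_{2i-1}}^w(E_{2i-1})}\,\bigl\|(x_j^{2i})_j\bigr\|_{\ell_{r_{2i}}^w(E_{2i})},
\]
promoting the tensor-product sequence to a weakly $p_i$-summable sequence in $F_i$.

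Next I would apply the Defant--Voigt Theorem to $\widetilde A$ to obtain $\pi_{(1;1,\ldots,1)}(\widetilde A)\leq K\|A\|$ for a universal constant $K$ (this step needs $\widetilde A$ to be at least bilinear, so it applies whenever $n\geq 3$; the case $n=2$ of (i) reduces to a direct application of the Inclusion Theorem to the hypothesis). In case (i) the Inclusion Theorem \ref{inclusiontheorem} upgrades the $(1;1,\ldots,1)$-summability of $\widetilde A$ to $(p;p_1,\ldots,p_m)$-summability precisely under the assumption $1/p\leq 1/p_1+\cdots+1/p_m-m+1$. In case (ii), chaining $1/p\leq \sum_i 1/p_i-m+1$ with $1/q\leq 1/r_{2m+1}+1/p-1$ gives $1/q\leq \sum_i 1/p_i+1/r_{2m+1}-m$, which is exactly the inclusion-theorem condition for $(q;p_1,\ldots,p_m,r_{2m+1})$-summability of $\widetilde A$. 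Evaluating the summing inequality for $\widetilde A$ on the tensor sequences $(x_j^{2i-1}\otimes x_j^{2i})_j$, together with $(x_j^{2m+1})_j$ in case (ii), and using the displayed estimate for each $i$, delivers the claimed $(p;r_1,\ldots,r_n)$- or $(q;r_1,\ldots,r_n)$-summing inequality for $A$.

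The main obstacle I anticipate is pinpointing the translation step: recognizing that the bilinear coincidence is equivalent to the weak $p_i$-summability of tensor sequences in $F_i$, and that the universal property of $\hat\otimes_\pi$ lets one switch from $n$-linear to $m$- or $(m+1)$-linear forms without loss of norm. Once this bridge is established, the index arithmetic lines up verbatim with the hypotheses of the statement, and the Defant--Voigt Theorem together with the Inclusion Theorem finish the job.
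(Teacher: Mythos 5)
Your proposal is correct and follows essentially the same route as the paper: pair the factors into projective tensor products $E_{2i-1}\hat\otimes_\pi E_{2i}$, apply the Defant--Voigt and Inclusion Theorems to the induced lower-arity form, and translate the bilinear coincidence hypothesis into the weak $p_i$-summability of the tensor sequences (your displayed estimate is exactly the implication (i) $\Rightarrow$ (iii) of the paper's Theorem~\ref{was}, which the paper invokes as a lemma and you rederive directly). The only minor divergence is in the odd case, where the paper first proves the $2m$-linear coincidence and then appends the last variable with exponent $1$ via a cited result before applying the Inclusion Theorem, whereas you carry $E_{2m+1}$ as an extra factor of the reduced $(m+1)$-linear form from the start; the index arithmetic you check confirms both give the same conclusion.
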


\begin{proof}
(i) Let $A\in\mathcal{L}(E_{1},\ldots,E_{n};\mathbb{K})=\mathcal{L}%
(E_{1},\ldots,E_{2m};\mathbb{K}).$ Using the associativity of the projective
tensor norm $\pi$ it is easy to see that there is an $m$-linear mapping
\[
B\in\mathcal{L}(E_{1}\hat{\otimes}_{\pi}E_{2},\ldots,E_{2m-1}\hat{\otimes
}_{\pi}E_{2m};\mathbb{K})
\]
such that
\[
B(x^{1}\otimes x^{2},\ldots,x^{2m-1}\otimes x^{2m})=A(x^{1},x^{2}%
,\ldots,x^{2m-1},x^{2m}),
\]
for all $x^{j}\in E_{j}$. Using the Defant-Voigt Theorem we know that $B$ is
$\left(  1;1,...,1\right)  $-summing and, since $1/p\leq1/p_{1}+\cdots
+1/p_{m}-m+1$, the inclusion theorem (Theorem \ref{inclusiontheorem}) tells us
that
\[
B\in\Pi_{(p;p_{1},\ldots,p_{m})}(E_{1}\hat{\otimes}_{\pi}E_{2},\ldots
,E_{2m-1}\hat{\otimes}_{\pi}E_{2m};\mathbb{K}).
\]
From Lemma \ref{was} it follows that
\begin{align*}
\lefteqn{\left(  \sum_{j=1}^{\infty}\left\vert A(x_{j}^{1},\ldots,x_{j}%
^{2m})\right\vert ^{p}\right)  ^{1/p}=\left(  \sum_{j=1}^{\infty}\left\vert
B(x_{j}^{1}\otimes x_{j}^{2},\ldots,x_{j}^{2m-1}\otimes x_{j}^{2m})\right\vert
^{p}\right)  ^{1/p}} &  & \\
&  &  &  \leq\pi_{(p;p_{1},\ldots,p_{m})}(B)\left\Vert (x_{j}^{1}\otimes
x_{j}^{2})_{j=1}^{\infty}\right\Vert _{\ell_{p_{1}}^{w}(E_{1}\hat{\otimes
}_{\pi}E_{2})}\cdots\left\Vert (x_{j}^{2m-1}\otimes x_{j}^{2m})_{j=1}^{\infty
}\right\Vert _{\ell_{p_{m}}^{w}(E_{2m-1}\hat{\otimes}_{\pi}E_{2m})}\\
&  &  &  \leq C\left\Vert B\right\Vert \cdot\prod_{i=1}^{2m}\left\Vert
(x_{j}^{i})_{j=1}^{\infty}\right\Vert _{\ell_{r_{i}}^{w}(E_{i})}%
\end{align*}
for all $(x_{j}^{i})_{j=1}^{\infty}\in\ell_{r_{i}}^{w}(E_{i})$, $i=1,\ldots
,2m=n$. Then $A\in\Pi_{(p;r_{1},\ldots,r_{2m})}(E_{1},\ldots,E_{2m}%
;\mathbb{K})$.

\bigskip
\noindent(ii) Let $A\in\mathcal{L}(E_{1},\ldots,E_{n};\mathbb{K}%
)=\mathcal{L}(E_{1},\ldots,E_{2m},E_{2m+1};\mathbb{K}).$ From (i) we know
that
\[
\mathcal{L}(E_{1},\ldots,E_{2m};\mathbb{K})=\Pi_{(p;r_{1},\ldots,r_{2m}%
)}(E_{1},\ldots,E_{2m};\mathbb{K}).
\]
Then, by \cite[Corollary 3.2]{Port} we get that
\[
\mathcal{L}(E_{1},\ldots,E_{2m+1};\mathbb{K})=\Pi_{(p;r_{1},\ldots,r_{2m}%
,1)}(E_{1},\ldots,E_{2m+1};\mathbb{K}).
\]
Hence, $A\in\Pi_{(p;r_{1},\ldots,r_{2m},1)}(E_{1},\ldots,E_{2m+1}%
;\mathbb{K}).$ Using the Inclusion Theorem \ref{inclusiontheorem} once again
we conclude that $A\in\Pi_{(q;r_{1},\ldots,r_{2m},r)}(E_{1},\ldots
,E_{2m+1};\mathbb{K})$ for any $p\leq q$ and $1\leq r<\infty$ such that
$1/p-1/q\geq1/r^{\prime}$. Choose $r=r_{2m+1}$ to complete the proof.
\end{proof}

Applying the previous result to the case $p_{i}=1$, $E_{i}=E$ and $r_{i}%
=r\geq1$ for any values of $i$, one obtains that if $\mathcal{L}%
(^{2}E;\mathbb{K})=\Pi_{(1;r,r)}(^{2}E;\mathbb{K})$ then
\[
\mathcal{L}(^{2n}E;\mathbb{K})=\Pi_{(1;r,\ldots,r)}(^{2n}E;\mathbb{K})
\mathrm{~~and~~} \mathcal{L}(^{2n+1}E;\mathbb{K})=\Pi_{(r;r,\ldots,r)}%
(^{2n+1}E;\mathbb{K}).
\]

We can actually improve a bit the result by imposing conditions on trilinear maps.

\begin{theorem}
Let $1\leq r\leq2$ and $E$ be a Banach space. If $\mathcal{L}(^{2}%
E;\mathbb{K})=\Pi_{(1;r,r)}(^{2}E;\mathbb{K})$ and $\mathcal{L}(^{3}%
E;\mathbb{K})=\Pi_{(1;r,r,r)}(^{3}E;\mathbb{K})$, then%
\[
\mathcal{L}(^{n}E;\mathbb{K})=\Pi_{(1;r,\ldots,r)}(^{n}E;\mathbb{K})
\]
for every $n\geq2.$
\end{theorem}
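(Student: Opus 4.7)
The plan is to mirror the proof of the preceding theorem, but to handle even and odd cases separately so that the trilinear hypothesis can be exploited to improve the first index in the odd case.

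For even $n=2k$ the statement is a direct specialization of part (i) of the preceding theorem: taking $p_i=1$ and $r_{2i-1}=r_{2i}=r$ for all $i$, the hypothesis $\mathcal{L}(^{2}E;\mathbb{K})=\Pi_{(1;r,r)}(^{2}E;\mathbb{K})$ supplies the required bilinear coincidences, and the constraint $1/p\le \sum_{i=1}^{m}1/p_i-m+1$ becomes $1/p\le 1$, so $p=1$ is admissible. This gives $\mathcal{L}(^{2k}E;\mathbb{K})=\Pi_{(1;r,\ldots,r)}(^{2k}E;\mathbb{K})$.

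For odd $n=2k+1$ with $k\ge 2$ (the case $k=1$ is the hypothesis), I would repeat the tensor-product argument of the previous theorem, except that I group the last three factors together rather than pair them. Given $A\in\mathcal{L}(^{n}E;\mathbb{K})$, associativity of the projective tensor norm yields a $k$-linear form
\[
B\in\mathcal{L}\bigl((E\hat\otimes_\pi E)^{k-1},\, E\hat\otimes_\pi E\hat\otimes_\pi E;\mathbb{K}\bigr)
\]
such that
\[
B(x^{1}\otimes x^{2},\ldots,x^{2k-3}\otimes x^{2k-2},\, x^{2k-1}\otimes x^{2k}\otimes x^{2k+1})=A(x^{1},\ldots,x^{2k+1}).
\]
By Defant--Voigt, $B$ is $(1;1,\ldots,1)$-summing. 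Next, one needs the analogues of Lemma~\ref{was} for the two kinds of factor: using $\mathcal{L}(^{2}E;\mathbb{K})=\Pi_{(1;r,r)}(^{2}E;\mathbb{K})$,
\[
\bigl\|(x_j^{2i-1}\otimes x_j^{2i})_j\bigr\|_{\ell_1^w(E\hat\otimes_\pi E)}\le C\,\bigl\|(x_j^{2i-1})_j\bigr\|_{\ell_r^w(E)}\bigl\|(x_j^{2i})_j\bigr\|_{\ell_r^w(E)}\qquad (i=1,\ldots,k-1),
\]
and using $\mathcal{L}(^{3}E;\mathbb{K})=\Pi_{(1;r,r,r)}(^{3}E;\mathbb{K})$,
\[
\bigl\|(x_j^{2k-1}\otimes x_j^{2k}\otimes x_j^{2k+1})_j\bigr\|_{\ell_1^w(E\hat\otimes_\pi E\hat\otimes_\pi E)}\le C'\prod_{l=2k-1}^{2k+1}\bigl\|(x_j^{l})_j\bigr\|_{\ell_r^w(E)}.
\]
Plugging these into the $(1;1,\ldots,1)$-summing inequality for $B$ produces
\[
\sum_{j}|A(x_j^{1},\ldots,x_j^{n})|=\sum_{j}|B(x_j^{1}\otimes x_j^{2},\ldots,x_j^{2k-1}\otimes x_j^{2k}\otimes x_j^{2k+1})|\le C''\prod_{l=1}^{n}\bigl\|(x_j^{l})_j\bigr\|_{\ell_r^w(E)},
\]
which is exactly the $(1;r,\ldots,r)$-summability of $A$.

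The only step that is not already recorded in the paper is the trilinear tensor-norm estimate. I expect this to be the main (but mild) obstacle; it should follow by the same duality argument that underlies Lemma~\ref{was}: continuous functionals on $E\hat\otimes_\pi E\hat\otimes_\pi E$ are precisely trilinear forms on $E^3$, and the hypothesis $\mathcal{L}(^{3}E;\mathbb{K})=\Pi_{(1;r,r,r)}(^{3}E;\mathbb{K})$ applied to each such test functional, via the definition \eqref{norm} with $p=1$ and $p_i=r$, yields the stated inequality directly from the closed-graph or uniform-boundedness principle. Once this trilinear version of Lemma~\ref{was} is in hand, the argument above concludes the proof for every $n\ge 2$.
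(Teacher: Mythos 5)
Your proof is correct, but your treatment of the odd case differs from the paper's. The paper argues by induction on odd $k$: assuming the result for $k$-linear forms, it regroups a $(k+2)$-linear form $A$ as a \emph{bilinear} form $B$ on $(\hat\otimes_\pi^k E)\times(E\hat\otimes_\pi E)$, applies Defant--Voigt to $B$, and then uses Theorem~\ref{was} twice --- once with the inductive hypothesis to control $\Vert(x_j^1\otimes\cdots\otimes x_j^k)_j\Vert_{\ell_1^w(\hat\otimes_\pi^k E)}$ and once with the bilinear hypothesis for the last pair; the trilinear hypothesis only enters as the base case $k=3$. You instead do the odd case in one shot, regrouping $A$ as a $k$-linear form on $(E\hat\otimes_\pi E)^{k-1}\times(E\hat\otimes_\pi E\hat\otimes_\pi E)$ and feeding in $k-1$ bilinear estimates plus one trilinear estimate. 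Both routes use exactly the same ingredients (associativity of $\pi$, Defant--Voigt, and the implication (i)$\Rightarrow$(iii) of Theorem~\ref{was}); yours avoids the induction and is closer in structure to the proof of the even case, at the cost of invoking Theorem~\ref{was} for $k-1$ different factors at once. One small correction: the ``trilinear tensor-norm estimate'' you flag as missing from the paper is in fact already available --- Theorem~\ref{was} is stated for arbitrary $n$, so its case $n=3$, $p=1$, $q_i=r$ is precisely the inequality you need (and, as you note, passing from the set equality $\mathcal{L}(^3E;\mathbb{K})=\Pi_{(1;r,r,r)}(^3E;\mathbb{K})$ to the uniform constant required in statement (i) of that theorem is a routine closed graph argument, which the paper also uses implicitly).
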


\begin{proof}
We have already proved the case $n$ even. Let us consider the case $n$ odd and
proceed by induction. Suppose that the result is valid for a fixed $k$ odd.
Let us prove that it is also true for $k+2$. Given $A\in\mathcal{L}%
(^{k+2}E;\mathbb{K})$, let $F=E\hat{\otimes}_{\pi}\cdots\hat{\otimes}_{\pi}E$
($k$ times, that is $F=\hat{\otimes}_{\pi}^{k}E$) and $G=E\hat{\otimes}_{\pi
}E.$ From the associativity properties of the projective norm there is a
bilinear form $B\in\mathcal{L}(F,G;\mathbb{K})$ so that
\[
B(x^{1}\otimes\cdots\otimes x^{k},x^{k+1}\otimes x^{k+2})=A(x^{1}%
,\ldots,x^{k+2}),
\]
for all $x^{j}\in E$, $j=1,\ldots,k+2$. Using the Defant-Voigt Theorem we know
that $B$ is $\left(  1;1,1\right)  $-summing and
\begin{align*}
\sum_{j=1}^{\infty}\left\vert A(x_{j}^{1},\ldots,x_{j}^{k+2})\right\vert  &
=\sum_{j=1}^{\infty}\left\vert B(x_{j}^{1}\otimes\cdots\otimes x_{j}^{k}%
,x_{j}^{k+1}\otimes x_{j}^{k+2})\right\vert \\
&  \leq\pi_{(1;1,1)}(B)\cdot\left\Vert (x_{j}^{1}\otimes\ldots\otimes
x_{j}^{k})_{j=1}^{\infty}\right\Vert _{\ell_{1}^{w}(E\hat{\otimes}_{\pi}%
\cdots\hat{\otimes}_{\pi}E)}\cdot\left\Vert (x_{j}^{k+1}\otimes x_{j}%
^{k+2})_{j=1}^{\infty}\right\Vert _{\ell_{1}^{w}(E\hat{\otimes}_{\pi}E)}.
\end{align*}
From Lemma \ref{was}, there are positive constants $C_{1}$ and $C_{2}$ such
that
\begin{align*}
&  \left\Vert (x_{j}^{1}\otimes\ldots\otimes x_{j}^{k})_{j=1}^{\infty
}\right\Vert _{\ell_{1}^{w}(E\hat{\otimes}_{\pi}\cdots\hat{\otimes}_{\pi}%
E)}\cdot\left\Vert (x_{j}^{k+1}\otimes x_{j}^{k+2})_{j=1}^{\infty}\right\Vert
_{\ell_{1}^{w}(E\hat{\otimes}_{\pi}E)}\\
&  \leq\left(  C_{1}\left\Vert (x_{j}^{1})_{j=1}^{\infty}\right\Vert
_{\ell_{r}^{w}(E)}\cdots\left\Vert (x_{j}^{k})_{j=1}^{\infty}\right\Vert
_{\ell_{r}^{w}(E)}\right)  \left(  C_{2}\left\Vert (x_{j}^{k+1})_{j=1}%
^{\infty}\right\Vert _{\ell_{r}^{w}(E)}\cdot\left\Vert (x_{j}^{k+2}%
)_{j=1}^{\infty}\right\Vert _{\ell_{r}^{w}(E_{{}})}\right)
\end{align*}
for all $(x_{j}^{i})_{j=1}^{\infty}\in\ell_{r}^{w}(E)$, $i=1,\ldots,k+2$.
\end{proof}

Next we present a technique to lift $(n-1)$-linear coincidences to $n$-linear
coincidences. A few definitions are in order: by $Rad(E)$ we denote the space
of sequences $(x_{j})_{j=1}^{\infty}$ in $E$ such that
\[
\Vert(x_{j})_{j=1}^{\infty}\Vert_{Rad(E)}:=\sup_{n\in\mathbb{N}}\left\Vert
\sum_{j=1}^{n}r_{j}x_{j}\right\Vert _{L^{2}([0,1],E)}<\infty,
\]
where $(r_{j})_{j\in\mathbb{N}}$ are the usual Rademacher functions.
%A linear operator $u\colon E\longrightarrow F$ is said to be
%\textit{almost summing} if the induced operator $\widehat{u} \colon \ell_2^u(E) \longrightarrow Rad(F)$ is well-defined (hence linear and bounded).
%there is a $C>0$ such that%
%\[
%\Vert\left(  u(x_{j})\right)  _{j=1}^{m}\Vert_{Rad(F)} \leq C\left\Vertt _{\ell^{w}_{2}(E)}%
%\]
%for any finite set of vectors $\{x_{1},\ldots,x_{m}\}$ in $E$.
%The space of
%all such operators is denoted by $\Pi
%_{as}(E;F)$.

Let $\ell_{p}^{u}(E)$ denote the closed subspace of $\ell_{p}^{w}(E)$ formed
by the sequences $(x_{j})_{j=1}^{\infty}\in\ell_{p}^{w}(E)$ such that
$\lim_{k\rightarrow\infty}\Vert(x_{j})_{j=k}^{\infty}\Vert_{\ell_{p}^{w}%
(E)}=0.$ It is well known that $\ell_{p_{k}}^{w}(E_{k})$ can be replaced with
$\ell_{p_{k}}^{u}(E_{k})$ in the definition of absolutely summing mappings (see, e.g., \cite{acad}).

According to \cite{Nach, Archiv} (see \cite[Chapter 12]{Diestel} for the
linear case), a multilinear map $A\in{\mathcal{L}}(E_{1},\ldots,E_{n};F)$ is
said to be \textit{almost summing} if the induced map $\widehat{A}\colon
\ell_{2}^{u}(E_{1}) \times\cdots\times\ell_{2}^{u}(E_{n}) \longrightarrow
Rad(F)$ is well-defined (hence $n$-linear and bounded).
%there exists
%$C>0$ such that
%\begin{equation}
%\label{almostsumming}\| \left(  A(x^{1}_{j},\ldots,x^{n}_{j})\right)  _{j=1}%
%^{m}\|_{Rad(F)}\le C\prod_{i=1}^{n} \Vert(x_{j}^{i})_{j=1}^{m}\Vert_{\ell
%_{2}^{w}(E_{i})}%
%\end{equation}
%for any finite set of vectors $(x^{i}_{j})_{j=1}^{m} \subset E_{i}$ for
%$i=1,\ldots,n$.
We write $\Pi_{as}(E_{1},\ldots,E_{n};F)$ for the space of all such
multilinear maps.

Remember that a \textit{GT-space} is a Banach space $E$ for which every
bounded linear operator from $E$ to $\ell_{2}$ is absolutely 1-summing. The
following result is a variant of \cite[Theorem~3.7]{BBPR} and the same
technique was used in \cite{Port}:

\begin{proposition}
Let $n\ge2$ and let $E_{1}, \ldots, E_{n}$ be Banach spaces such that
\[
\mathcal{L}(E_{1},\ldots,E_{n-1};E_{n}^{\prime})= \Pi_{as}(E_{1}%
,\ldots,E_{n-1};E_{n}^{\prime}).
\]
{\rm (i)} Then
\[
\mathcal{L}(E_{1},\ldots,E_{n};\mathbb{K})= \Pi_{(1;2,\ldots,2,1)}%
(E_{1},\ldots,E_{n};\mathbb{K}).
\]
{\rm (ii)} If $E_{n}^{\prime}$ is a $GT$-space of cotype 2 then
\[
\mathcal{L}(E_{1},\ldots,E_{n};\mathbb{K})= \Pi_{(1;2,\ldots,2)}(E_{1}%
,\ldots,E_{n};\mathbb{K}).
\]

\end{proposition}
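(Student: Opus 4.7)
\medskip
\noindent\textbf{Proof plan.}
Given $A\in\mathcal{L}(E_1,\ldots,E_n;\mathbb K)$, associate to it the $(n-1)$-linear map $T_A\in\mathcal{L}(E_1,\ldots,E_{n-1};E_n')$ defined by the duality
$\langle T_A(x^1,\ldots,x^{n-1}),x^n\rangle = A(x^1,\ldots,x^n)$.
The hypothesis makes $T_A$ almost summing, so $\widehat{T_A}\colon \ell_2^u(E_1)\times\cdots\times\ell_2^u(E_{n-1})\to Rad(E_n')$ is bounded by some constant $C$ times the product of $\ell_2^w$-norms. This is the only way the hypothesis enters in both parts.

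For part (i), I would carry out the standard Rademacher trick. Using $\int_0^1 r_j(t)r_k(t)\,dt=\delta_{jk}$, for any signs $(\epsilon_j)$ one can write
$$\sum_j \epsilon_j A(x^1_j,\ldots,x^n_j)=\int_0^1\Bigl\langle \sum_j r_j(t)T_A(x^1_j,\ldots,x^{n-1}_j),\ \sum_k r_k(t)\epsilon_k x^n_k\Bigr\rangle dt.$$
Applying the duality pairing pointwise and then Cauchy--Schwarz in $L^2([0,1])$, and finally taking the supremum over sign choices $(\epsilon_j)$, gives
$$\sum_j |A(x^1_j,\ldots,x^n_j)|\le \|(T_A(x^1_j,\ldots,x^{n-1}_j))_j\|_{Rad(E_n')}\cdot \|(x^n_j)_j\|_{Rad(E_n)}.$$
The first factor is controlled by the almost summing hypothesis. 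For the second, observe that for each $t$ the element $\sum_{j\le N}r_j(t)x^n_j$ is a signed sum, so its norm is dominated by $\sup_{|\epsilon_j|=1}\|\sum\epsilon_jx^n_j\|=\|(x^n_j)\|_{\ell_1^w(E_n)}$; this bound passes to the $L^2$-norm, hence $\|(x^n_j)\|_{Rad(E_n)}\le \|(x^n_j)\|_{\ell_1^w(E_n)}$. This concludes (i).

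For part (ii), the naive strategy of replacing $\ell_1^w(E_n)$ by $\ell_2^w(E_n)$ in the second factor fails in general (it would force a type-2-like property on $E_n$, which is not forthcoming from GT cotype 2 of $E_n'$ alone, as $c_0$ with $E_n'=\ell_1$ shows). Instead, I would exploit the GT cotype 2 hypothesis on $E_n'$ directly through the operator $R\colon E_n'\to\ell_2$ defined, for fixed $(x^n_j)\in\ell_2^w(E_n)$, by $R(u)=(\langle u,x^n_j\rangle)_j$, which is bounded with $\|R\|=\|(x^n_j)\|_{\ell_2^w(E_n)}$. By Grothendieck's theorem applied to the GT space $E_n'$ (noting that bounded operators from $E_n$ into $\ell_2$ extend to $E_n''$, the dual of a GT cotype 2 space, which is also GT by Pisier), $R$ is absolutely $1$-summing with $\pi_1(R)\le K_G\|R\|$. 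The cotype 2 of $E_n'$ upgrades the almost summing $T_A$ to the $(2;2,\ldots,2)$-summing multilinear operator, so $(T_A(x^1_j,\ldots,x^{n-1}_j))_j\in\ell_2(E_n')\subset\ell_2^w(E_n')$ with appropriate norm. Combining these two properties through a Pisier/Pietsch-type factorization of $R$ and the duality between $\ell_2\langle E_n'\rangle$ and $\ell_2^w(E_n)$ implicit in Proposition~\ref{mainprop}, one obtains
$$\sum_j|\langle T_A(x^1_j,\ldots,x^{n-1}_j),x^n_j\rangle|\le C\,\|(T_A(\ldots))\|_{\ell_2^w(E_n')}\cdot \|(x^n_j)\|_{\ell_2^w(E_n)},$$
which together with the almost summing bound on the first factor yields $A\in\Pi_{(1;2,\ldots,2)}(E_1,\ldots,E_n;\mathbb K)$.

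The main obstacle lies in the last step of (ii): extracting from the $1$-summability of $R$ and the cotype 2 of $E_n'$ a Cohen $(2,2)$-type inequality that pairs a sequence produced by an almost summing multilinear map against an $\ell_2^w(E_n)$-sequence. The fact that $\ell_2^w(E_n)$ is strictly larger than $\ell_1^w(E_n)$ (and than $Rad(E_n)$ for generic $E_n$) makes this transition delicate and is precisely the place where the Grothendieck--Pisier theorem for the dual GT cotype 2 space $E_n'$ is indispensable.
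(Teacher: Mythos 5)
Your part (i) is correct and is essentially the paper's own argument: orthogonality of the Rademachers, Cauchy--Schwarz in $L^2([0,1])$, the almost summing bound on the first factor, and the elementary estimate $\Vert(x^n_j)_j\Vert_{Rad(E_n)}\le\Vert(x^n_j)_j\Vert_{\ell_1^w(E_n)}$ on the second.

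Part (ii), however, has a genuine gap, and it sits exactly where you flag ``the main obstacle''. The inequality you aim for,
\[
\sum_j|\langle T_A(x^1_j,\ldots,x^{n-1}_j),x^n_j\rangle|\le C\,\Vert(T_A(\ldots))_j\Vert_{\ell_2^w(E_n')}\cdot \Vert(x^n_j)_j\Vert_{\ell_2^w(E_n)},
\]
is false as stated: by the description (\ref{dualform}) of $\ell_2\langle E_n'\rangle$ it would assert $\ell_2^w(E_n')\subset\ell_2\langle E_n'\rangle$, i.e.\ that $id_{E_n'}$ is Cohen $(2,2)$-summing, which fails already for $E_n'=\ell_1$ (take $u_j=x^n_j=e_j$, $j\le N$: the left side is $N$ while the right side is $O(\sqrt N)$). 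Replacing the first factor by the strong norm $\Vert\cdot\Vert_{\ell_2(E_n')}$ (which is what cotype $2$ actually gives you from the $Rad$ bound) does not help either, since $\ell_2(E_n')\not\subset\ell_2\langle E_n'\rangle$ in general (again $E_n'=\ell_1$: $\ell_2\langle\ell_1\rangle=\ell_2\hat\otimes_\pi\ell_1$ rejects the diagonal sequence $u_j=a_je_j$ with $(a_j)\in\ell_2\setminus\ell_1$). The $1$-summability of your operator $R\colon E_n'\to\ell_2$ does not rescue this, because $\pi_1(R)$ controls $\sum_j\Vert Ru_j\Vert$ only against the $\ell_1^w(E_n')$-norm of $(u_j)_j$, which is not available. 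The ingredient that actually closes the argument --- and the one the paper uses --- is the identification $Rad(E_n')=\ell_2\hat\otimes_\pi E_n'=\ell_2\langle E_n'\rangle$ with equivalent norms, valid precisely when $E_n'$ is a $GT$-space of cotype $2$ (Arregui--Blasco, Fourie--R\"ontgen). With that, the almost summing hypothesis places $(T_A(x^1_j,\ldots,x^{n-1}_j))_j$ directly in $\ell_2\langle E_n'\rangle$ with norm $\le C\Vert A_{n-1}\Vert\prod_{i=1}^{n-1}\Vert(x^i_j)_j\Vert_{\ell_2^w(E_i)}$, and the pairing against $\Vert(x^n_j)_j\Vert_{\ell_2^w(E_n)}$ is then exactly the duality $\ell_2^w(E_n)\subset(\ell_2\hat\otimes_\pi E_n')'$. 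You should either invoke this characterization or prove it; deriving it from ``Grothendieck plus cotype $2$'' is a nontrivial result in its own right and is not accomplished by the factorization sketch you give.
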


\begin{proof}
Given $A\in\mathcal{L}(E_{1},\ldots,E_{n};\mathbb{K})$, define $A_{n-1}%
\in\mathcal{L}(E_{1},\ldots,E_{n-1};E_{n}^{\prime})$ in the obvious way, that
is,
\[
A_{n-1}(x_{1},\ldots,x_{n-1})(x_{n})=A(x_{1},\ldots,x_{n}).
\]
By assumption, $A_{n-1}\in\Pi_{as}(E_{1},\ldots,E_{n-1};E_{n}^{\prime})$.
%, so the induced $n$-linear mapping $\widehat{A}_{n-1}$ belongs to ${\cal L}\left(\ell_2^u(E_1), \ldots, \ell_2^u(E_{n-1});Rad(F) \right)$.
Let $(x_{j}^{i})_{j\in\mathbb{N}}\in\ell_{2}^{u}(E_{i})$ for $1\leq i\leq n-1$
and $(x_{j}^{n})_{j\in\mathbb{N}}\in\ell_{1}^{u}(E_{n})$ be given. For any
$m\in\mathbb{N}$ there exists $(\lambda_{j})_{j=1}^{m}$ such that
$|\lambda_{j}|=1$ and we have
%\begin{align*}
%\sum_{j=1}^m|A(x^{1}_j,\ldots, x^{n}_j)| &= \sum_{j=1}^m A_{n-1}(x^{1}_j,\ldots, x^{n-1}_j)(\lambda_jx^n_j)\\
%&=\int_0^1 \sum_{j=1}^m \left[(A_{n-1}(x^{1}_j,\ldots,x_{j}^{n-1})r_j(t)\right] \left(\sum_{j=1}^m \lambda_jx^{n}_jr_j(t)\right) dt\\
%&\le \int_0^1 \left\|\sum_{j=1}^m (A_{n-1}(x^{1}_j,\ldots,x_{j}^{n-1})r_j(t)\right\|\cdot \left\|\sum_{j=1}^m \lambda_jx^{n}_jr_j(t)\right\| dt\\
%&\le \left(\int_0^1 \left\|\sum_{j=1}^m (A_{n-1}(x^{1}_j,\ldots,x_{j}^{n-1})r_j(t)\right\|^2dt\right)^{1/2}\cdot\left(\int_0^1 \left\|\sum_{j=1}^m \lambda_jx^{n}_jr_j(t)\right\|^2 dt\right)^{1/2}\\
%&\le  C\|A_{n-1}\| \cdot\prod_{i=1}^{n-1} \Vert(x^i_j)_{j=1}^\infty\Vert_{\ell_{2}^w(E_i)}\cdot \Vert(x^n_j)_{j=1}^\infty\Vert_{\ell_{1}^w(E_n)}.
%\end{align*}%
\[
\sum_{j=1}^{m}|A(x_{j}^{1},\ldots,x_{j}^{n})|=\sum_{j=1}^{m}A_{n-1}(x_{j}%
^{1},\ldots,x_{j}^{n-1})(\lambda_{j}x_{j}^{n})\hspace*{20em}%
\]
\vspace*{-0.9em}
\[
=\int_{0}^{1}\sum_{j=1}^{m}\left[  (A_{n-1}(x_{j}^{1},\ldots,x_{j}^{n-1}%
)r_{j}(t)\right]  \left(  \sum_{j=1}^{m}\lambda_{j}x_{j}^{n}r_{j}(t)\right)
dt\hspace*{9.8em}%
\]%
\[
\leq\int_{0}^{1}\left\Vert \sum_{j=1}^{m}(A_{n-1}(x_{j}^{1},\ldots,x_{j}%
^{n-1})r_{j}(t)\right\Vert \cdot\left\Vert \sum_{j=1}^{m}\lambda_{j}x_{j}%
^{n}r_{j}(t)\right\Vert dt\hspace*{9.5em}%
\]%
\[
\leq\left(  \int_{0}^{1}\left\Vert \sum_{j=1}^{m}(A_{n-1}(x_{j}^{1}%
,\ldots,x_{j}^{n-1})r_{j}(t)\right\Vert ^{2}dt\right)  ^{1/2}\cdot\left(
\int_{0}^{1}\left\Vert \sum_{j=1}^{m}\lambda_{j}x_{j}^{n}r_{j}(t)\right\Vert
^{2}dt\right)  ^{1/2}%
\]%
\[
\leq C\Vert A_{n-1}\Vert\cdot\prod_{i=1}^{n-1}\Vert(x_{j}^{i})_{j=1}^{m
}\Vert_{\ell_{2}^{w}(E_{i})}\cdot\Vert(x_{j}^{n})_{j=1}^{m}\Vert
_{\ell_{1}^{w}(E_{n})}.\hspace*{12em}%
\]
Passing to the limit for $m\rightarrow\infty$ we get (i). The proof of (ii)
follows easily using the characterization of $E^{\prime}$ being a $GT$-spaces
of cotype 2 (see \cite[Theorem 1]{AB1} and \cite{FR}) in terms of the equality
$Rad(E^{\prime})=\ell_{2}\hat{\otimes}_{\pi}E^{\prime}$ with equivalent norms.
Hence given $(x_{j}^{i})_{j\in\mathbb{N}}\in\ell_{2}^{w}(E_{i})$ for $1\leq
i\leq n$, $m\in\mathbb{N}$ and $(\lambda_{j})_{j=1}^{m}$ such that
$|\lambda_{j}|=1$, as above we now can write, using that $\ell_{2}%
^{w}(E)\subseteq\ell_{2}^{w}(E^{\prime\prime})=(\ell_{2}\hat{\otimes}_{\pi
}E^{\prime})^{\prime}$,
\begin{align*}
\sum_{j=1}^{m}|A(x_{j}^{1},\ldots,x_{j}^{n})|  &  =\sum_{j=1}^{m}A_{n-1}%
(x_{j}^{1},\ldots,x_{j}^{n-1})(\lambda_{j}x_{j}^{n})\\
&  \leq\left\Vert \left(  A_{n-1}(x_{j}^{1},\ldots,x_{j}^{n-1})\right)
_{j=1}^m\right\Vert _{\ell_{2}\hat{\otimes}_{\pi}E_{n}^{\prime}}\cdot\left\Vert
(\lambda_{j}x_{j}^{n})_{j=1}^m\right\Vert _{\ell_{2}^{w}(E_{n})}\\
&  \leq C\left\Vert \left(  A_{n-1}(x_{j}^{1},\ldots,x_{j}^{n-1})\right)
_{j=1}^m\right\Vert _{Rad(E_{n}^{\prime})}\cdot\left\Vert (\lambda_{j}x_{j}%
^{n})_{j=1}^m\right\Vert _{\ell_{2}^{w}(E_{n})}\\
&  \leq C\Vert A_{n-1}\Vert\cdot\prod_{i=1}^{n}\Vert(x_{j}^{i})_{j=1}^m\Vert
_{\ell_{2}^{w}(E_{n})}.
\end{align*}
This finishes the proof when passing to the limit for $m\rightarrow\infty$.
\end{proof}

Now we recall that $\Pi_{as}(\ell_{1}, E)=\mathcal{L}(\ell_{1}, E)$ if and
only if $E$ has type 2 (see \cite[Theorem 21.10]{Diestel}). Therefore we
obtain the following corollary.

\begin{corollary}
Let $E$ be a Banach space such that $E^{\prime}$ has type 2. Then
\[
\mathcal{L}(\ell_{1},E;\mathbb{K})= \Pi_{(1;2,1)}(\ell_{1},E;\mathbb{K}).
\]
In particular, $\mathcal{L}(\ell_{1},\ell_{p};\mathbb{K})= \Pi_{(1;2,1)}%
(\ell_{1},\ell_{p};\mathbb{K})$ for $1<p\le2$.
\end{corollary}

An application of Lemma \ref{was} yields the following result on the structure
of some tensor products.

\begin{corollary}
Let $E$ be a Banach space such that $E^{\prime}$ has type 2. Then
%\[
%\ell_{2}^{w}(\ell_{1})\hat{\otimes}_{\pi}\ell_{1}^{w}(E)\subseteq\ell_{1}%
%^{w}(\ell_{1}\hat{\otimes}_{\pi}E).
%\]
there exists $C>0$ such that
\[
\Vert(x_{j}^{1}\otimes x_{j}^{2})_{j=1}^{\infty}\Vert_{\ell_{1}^{w}(\ell_1\hat{\otimes}_{\pi}E)}\leq C\Vert(x_{j}^1)_{j=1}^{\infty}\Vert_{\ell_{2}^{w}(\ell_1)}\Vert(x_{j}^2)_{j=1}^{\infty}\Vert_{\ell_{1}^{w}(E)},
\]
for all sequences $(x_{j}^1)_{j=1}^{\infty}\in\ell_{2}^{w}(\ell_1)$ and $(x_{j}^2)_{j=1}^{\infty}\in \ell_{1}^{w}(E)$.
\end{corollary}

\medskip As we have mentioned in the introduction, the fact that any operator
$T\colon E_{1}\longrightarrow E_{2}^{\prime}$ is Cohen $(p_{2}^{\prime}%
,p_{1})$-nuclear is equivalent to the coincidence $\Pi_{(1;p_{1},p_{2})}%
(E_{1},E_{2};\mathbb{K})=\mathcal{L}(E_{1},E_{2};\mathbb{K})$. In particular,
using Grothendieck's Theorem $\Pi_{(1;2,2)}(c_{0},c_{0};\mathbb{K}%
)=\mathcal{L}(c_{0},c_{0};\mathbb{K})$ we get that $\mathcal{L}(c_{0};\ell
_{1})=CN_{(2;2)}(c_{0};\ell_{1})$. Using now Lemma \ref{was} this can be
reformulated as the existence of a constant $C>0$ such that
\[
\Vert(x_{j}^{1}\otimes x_{j}^{2})_{j=1}^{\infty}\Vert_{\ell_{1}^{w}(c_0\hat{\otimes}_{\pi}c_0)}\leq C\Vert(x_{j}^1)_{j=1}^{\infty}\Vert_{\ell_{2}^{w}(c_0)}\Vert(x_{j}^2)_{j=1}^{\infty}\Vert_{\ell_{2}^{w}(c_0)},
\]
for all sequences $(x_{j}^1)_{j=1}^{\infty}, (x_{j}^2)_{j=1}^{\infty}\in \ell_{2}^{w}(c_0)$.
%\[\textcolor{blue}{
%\ell_{2}^{w}(c_{0})\hat{\otimes}_{\pi}\ell_{2}^{w}(c_{0})\subseteq\ell_{1}%
%^{w}(c_{0}\hat{\otimes}_{\pi}c_{0})}.
%\]
%\textcolor{blue}{We have abandoned this kind of inclusion throughout the paper. Is it good to keep this last one? I'm afraid not. Any suggestion?}

As pointed out before, the coincidence $\mathcal{L}(E_{1},E_{2};\mathbb{K}%
)=\Pi_{(1;p_{1},p_{2})}(E_{1},E_{2};\mathbb{K})$ is used to lift the
coincidence result to $n\geq2$. We shall show now that this condition is very
much connected to the Littlewood-Orlicz property.

\begin{definition}\rm
Let $2\le q<\infty$. We say that a Banach space $E$ satisfies the
\textit{$q$-Littlewood-Orlicz property} if $\ell_{1}^{w}(E)\subseteq\ell
_{q}\langle E\rangle$.
\end{definition}

Note that, due to Talagrand's result, spaces with the $q$-Littlewood-Orlicz
property for $q>2$ must have cotype $q$.

\begin{theorem}
\label{lo}Let $2\le q<\infty$ and $E$ be a Banach space. The following
statements are equivalent.\newline
{\rm (i)} $E^{\prime}$ has the
$q$-Littlewood-Orlicz property.
\newline
{\rm(ii)} ${\mathcal{L}%
}(X,E;\mathbb{K})=\Pi_{(1;1,q^{\prime})}(X,E;\mathbb{K})$ for any Banach space
$X$.\newline
{\rm(iii)} %$\ell_{1}^{w}(X)\hat\otimes_{\pi}\ell_{q^{\prime}%
%}^{w}(E)\subseteq\ell_{1}^{w}(X\hat\otimes_{\pi}E)$ for any Banach space $X$.
There exists $C>0$ such that
\[
\Vert(x_{j}^{1}\otimes x_{j}^{2})_{j=1}^{\infty}\Vert_{\ell_{1}^{w}(X\hat{\otimes}_{\pi}E)}\leq C\Vert(x_{j}^1)_{j=1}^{\infty}\Vert_{\ell_{1}^{w}(X)}\Vert(x_{j}^2)_{j=1}^{\infty}\Vert_{\ell_{q'}^{w}(E)}.
\]
for all sequences $(x_{j}^1)_{j=1}^{\infty}\in\ell_{1}^{w}(X)$ and $(x_{j}^2)_{j=1}^{\infty}\in \ell_{q'}^{w}(E)$.
\end{theorem}

\begin{proof} (i) $\Longrightarrow$ (ii) Let $A \colon X\times
E\longrightarrow {\mathbb K}$ be a bounded bilinear form and let $T_A \colon X\longrightarrow E'$ be the associated linear
operator. For $(x_j)_{j=1}^\infty\in \ell_1^w(X)$ and $(y_j)_{j=1}^\infty\in \ell_{q'}^w(E)$,
\begin{eqnarray*}
\sum_{j=1}^\infty|A(x_j,y_j)|&=& \sum_{j=1}^\infty| T_A(x_j)(y_j)|\\
&=& \sup_{|\alpha_j|=1}\left|\sum_{j=1}^\infty T_A(x_j)(\alpha_jy_j)\right|\\
&\le& \Vert( T_A(x_j))_{j=1}^\infty\Vert_{\ell_q\hat\otimes_\pi E'}\cdot\Vert(y_j)_{j=1}^\infty\Vert_{\ell_{q'}^w(E)}\\
&\le & C\Vert( T_A(x_j))_{j=1}^\infty\Vert_{\ell_1^w(E')}\cdot\Vert(y_j)_{j=1}^\infty\Vert_{\ell_{q'}^w(E)}\\
&\le & C\Vert A\Vert\cdot\Vert (x_j)_{j=1}^\infty\Vert_{\ell_1^w(X)}\cdot\Vert(y_j)_{j=1}^\infty\Vert_{\ell_{q'}^w(E)}.
\end{eqnarray*}
(ii) $\Longrightarrow$ (i)  Let $(x'_j)_{j=1}^\infty\in \ell_1^w(E')$ be given. Consider the bounded bilinear
form $A \colon c_0\times E\longrightarrow {\mathbb K}$ defined by the condition $A(e_j,x)= x'_j(x)$ for $x\in E$ and $j \in \mathbb{N}$.
To show that $(x'_j)_{j=1}^\infty\in \ell_q\langle E'\rangle$ it suffices to check that there is $C>0$ such that
$$\sum_{j=1}^\infty | x'_j(x_j)|\le C \Vert
(x_j)_j\Vert_{\ell_{q'}^w(E)}$$
for every $(x_j)_{j=1}^\infty \in \ell_{q'}^w(E)$. Using $X=c_0$ in the assumption,
this follows from
$$\sum_{j=1}^\infty | x'_j(x_j)|= \sum_{j=1}^\infty |A( e_j,x_j)|\le \|A\| \cdot \Vert(e_j)_{j=1}^\infty\|_{\ell_1^w(c_0)}\cdot \Vert
(x_j)_{j=1}^\infty\Vert_{\ell_{q'}^w(E)}.$$
(ii) $\Longleftrightarrow$ (iii) This is a particular case of Lemma \ref{was}.
\end{proof}

\begin{proposition}
\label{lo1}Let $2\le q<\infty$ and $E$ be a Banach space. The following
statements are equivalent.\newline
{\rm(i)} $E$ has the $q$%
-Littlewood-Orlicz property.\newline
{\rm(ii)} ${\mathcal{L}}%
(X,E;\mathbb{K})=\Pi_{(1;q^{\prime},1)}(X,E;\mathbb{K})$ for any Banach space
$X$.\newline
{\rm(iii)} % $\ell_{q^{\prime}}^{w}(X)\hat\otimes_{\pi}\ell_{1}^{w}(E)\subseteq\ell_{1}^{w}(X\hat\otimes_{\pi}E)$
%for any Banach space $X$.
 There exists $C>0$ such that
\[
\Vert(x_{j}^{1}\otimes x_{j}^{2})_{j=1}^{\infty}\Vert_{\ell_{1}^{w}(X\hat{\otimes}_{\pi}E)}\leq C\Vert(x_{j}^1)_{j=1}^{\infty}\Vert_{\ell_{q'}^{w}(X)}\Vert(x_{j}^2)_{j=1}^{\infty}\Vert_{\ell_{1}^{w}(E)}.
\]
for all sequences $(x_{j}^1)_{j=1}^{\infty}\in\ell_{q'}^{w}(X)$ and $(x_{j}^2)_{j=1}^{\infty}\in \ell_{1}^{w}(E)$.
\end{proposition}

\begin{proof}
(i) $\Longrightarrow$ (ii) Let $A\in {\mathcal{L}}(X,E;\mathbb{K})$ and let $T_A\colon X\longrightarrow E'$ be its associated linear map.  Let $(x_j)_{j=1}^\infty\in \ell_{q'}^w(X)$ and $(y_j)_{j=1}^\infty\in \ell_{1}^w(E)$. From (i) one has that
$(y_j)_{j=1}^\infty\in \ell_{q}\langle E\rangle$, hence
\begin{align*}
\sum_{j=1}^\infty|A(x_j,y_j)|&= \sum_{j=1}^\infty| \langle T_A(x_j),y_j\rangle |= \sup_{|\alpha_j|=1}\left|\sum_{j=1}^\infty \langle T_A(\alpha_j x_j), y_j\rangle \right|\\
&\le \Vert( T_A(x_j))_{j=1}^\infty\Vert_{\ell_{q'}^w(E')}\cdot \Vert(y_j)_{j=1}^\infty\Vert_{\ell_q\langle E\rangle }\\
&\le  \|A\|\cdot \Vert( x_j)_{j=1}^\infty\Vert_{\ell_{q'}^w(X)}\cdot \Vert(y_j)_{j=1}^\infty\Vert_{\ell_{1}^w(E)}.
\end{align*}
(ii) $\Longrightarrow$ (i)  Let $(x_j)_{j=1}^\infty\in \ell_1^w(E)$ be given. To show that $(x_j)_{j=1}^\infty\in \ell_q\langle E\rangle$, fix $(x'_j)_{j=1}^\infty\in \ell_{q'}^w(E')$  and consider the bounded bilinear
form $A \colon \ell_q\times E\longrightarrow {\mathbb K}$ defined by the condition $$A((\lambda_j),x)= \sum_j\lambda_jx'_j(x)$$ for $x\in E$ and $(\lambda_j) \in \ell_{q}$. Clearly
$\|A\|= \|(x'_j)_j\|_{\ell_{q'}^w(E')}$ and $A(e_j,x_j)=x'_j(x_j)$.
From assumption we know that $A\in \Pi_{(1;q',1)}(\ell_q,E;\mathbb{K})$, from which it follows that
$$\sum_{j=1}^\infty | x'_j(x_j)|\le \|A\|\cdot \Vert
(e_j)_j\Vert_{\ell_{q'}^w(\ell_q)}\cdot\Vert
(x_j)_j\Vert_{\ell_{1}^w(E)}=\|(x'_j)_j\|_{\ell_{q'}^w(E')}\cdot\Vert
(x_j)_j\Vert_{\ell_{1}^w(E)}$$
for every $(x_j)_{j=1}^\infty \in \ell_{1}^w(E)$. This shows the result.
\noindent The equivalence (ii) $\Longleftrightarrow$ (iii)  has been shown in Lemma \ref{was}.
\end{proof}

\bigskip

\noindent\textbf{Acknowledgement.} The authors thank the referee for her/his important suggestions that improved substantially the final presentation of the paper.
%%%%%%%%%%%%

\vspace{2mm}

\noindent[Oscar Blasco] Departamento de An\'alisis Matem\'atico, Universidad
de Valencia, 46.100 Burjasot - Valencia, Spain, e-mail: oscar.blasco@uv.es

\medskip

\noindent[Geraldo Botelho] Faculdade de Matem\'atica, Universidade Federal de
Uberl\^andia, 38.400-902 - Uberl\^andia, Brazil, e-mail: botelho@ufu.br

\medskip

\noindent[Daniel Pellegrino] Departamento de Matem\'atica, Universidade
Federal da Para\'iba, 58.051-900 - Jo\~ao Pessoa, Brazil, e-mail:
dmpellegrino@gmail.com and pellegrino@pq.cnpq.br

\medskip

\noindent[Pilar Rueda] Departamento de An\'alisis Matem\'atico, Universidad de
Valencia, 46.100 Burjasot - Valencia, Spain, e-mail: pilar.rueda@uv.es
\end{document}